\newcommand{\tot}{\mathsf{T}}
\newcommand{\III}{\mathsf{I}}
\newcommand{\C}{\phc}
\newcommand{\im}{\mathsf{im}}
\newcommand{\ke}{\mathsf{ker}}
\newcommand{\M}{\ph}
\newcommand{\pv}{b}
\newcommand{\ph}{d}
\newcommand{\HH}{\mathsf{H}}
\newcommand{\DDD}{\Omega}
\newcommand{\pvc}{\pv}
\newcommand{\phc}{B}
\newcommand{\xic}{\upsilon}
\newcommand{\xinc}{\xic_n}
\newcommand{\xinnc}{\xic_{n+1}}
\newtheorem{proposition}{Proposition}
\newtheorem{corollary}{Corollary}
\newtheorem{theorem}{Theorem}
\newtheorem{lemma}{Lemma}
\theoremstyle{definition}
\newtheorem{definition}{Definition}
\newtheorem{example}{Example}
\newtheorem{remark}{Remark}
\begin{document}
\title
{Cyclic vs mixed homology}
\author{Ulrich Kr\"ahmer}
\author{Dylan Madden}

\begin{abstract}
The spectral theory of the Karoubi operator 
due to Cuntz and Quillen is
extended to general mixed
(duchain) complexes, that is, chain complexes which are
simultaneously cochain complexes. Connes' coboundary
map $\phc$ can be viewed as a perturbation of the
noncommutative De Rham differential by a polynomial in
the Karoubi operator. The homological impact of such
perturbations is expressed in terms of two short exact
sequences. 

\end{abstract}

\thanks{U.K.~thanks Gabriella B\"ohm, Niels Kowalzig
and Tomasz Maszczyk  
for discussions, and IMPAN Warsaw and
the Wigner Institute Budapest for hospitality. Both
authors thank the referee for a very careful reading of
the manuscript that lead to numerous improvements.}

\maketitle
\tableofcontents 

\section{Introduction and overview}
\subsection{Mixed complexes}
Inspired by Connes' work on cyclic
homology \cite{connes,connesihes}, Dwyer and Kan 
\cite{dk,dk2} initiated 
the study of general 
chain complexes which simultaneously
are cochain complexes: 

\begin{definition}
A \emph{mixed complex} of $R$-modules
is a triple $(\DDD,\pv,\ph)$ where 
$(\DDD,\pv)$ and $(\DDD,\ph)$ are a chain 
respectively a cochain 
complex:
$$
   \xymatrix{\ldots \ar@<1mm>[r]^{\pv} & 
	{\DDD}_{2}
	\ar@<1mm>[r]^{\,\,{\pv}}
	\ar@<1mm>[l]^{\ph} & 
	{\DDD}_{1} 
	\ar@<1mm>[r]^{\!\!{\pv}}
	\ar@<1mm>[l]^{\,\,{\ph}} & 
	{\DDD}_{0}
	\ar@<1mm>[r]^{\,\,\,\,0}
	\ar@<1mm>[l]^{\!\!\ph} & 0
	\ar@<1mm>[l]^{\,\,\,\,0}},\qquad
	\ph^2=\pv^2=0.
$$ 
The \emph{mixed homology} $\HH(\DDD,\pv,\ph)$ is
the homology of  
$(\tot(\DDD),\pv+\ph)$, where
$$
	\tot_n(\DDD):=
	\bigoplus_{i \ge 0}
	\hat\DDD_{n-2i},\qquad
	\hat \DDD_i:= \DDD_{i}/\im\,\xi,\qquad
	\xi := \pv\ph+\ph\pv.
$$
\end{definition}

Dwyer and Kan used the term   
\emph{duchain} rather than  
\emph{mixed} complex, but the latter 
(introduced by Kassel \cite{kassel}) is now the
standard terminology, although it is mostly associated
with the special case $ \xi = 0 $.  

The motivating examples are the 
noncommutative differential forms over an associative
algebra with the De Rham differential $\ph$ and
the Hochschild boundary map $\pv$, 
see Section~\ref{cyclichomology} and 
Example~\ref{exncdiff} below, or \cite[Section~2.6]{loday} for a
detailed account. However, 
mixed complexes appear
in a wide range of contexts,
e.g.~Poisson manifolds \cite{koszul,brylinski},
Lie-Rinehart algebras (Lie algebroids) 
\cite{huebschmann}, and 
Hopf algebras \cite{cm,crainic,hkrs}.

\subsection{The spectral decomposition}\label{asasheaf}
Our aim here is to revisit the construction of cyclic
homology from the perspective of
general mixed complexes.
To this end, we view $\DDD$ as a $k[x]$-module, 
where $k$ is the centre of $R$ and  
$ x $ acts by $\xi$.
Thus ${\DDD}$ defines 
a sheaf of mixed complexes
over the affine line $k$; this generalises the 
spectral decomposition of $\DDD$ considered by Cuntz and Quillen
\cite{cq}. 

The localisation 
$S^{-1} \DDD:=k[x,x^{-1}] \otimes_{k[x]} {\DDD}$ is
contractible as a chain and cochain complex, for 
if $\xi$ is invertible, then we have
$$
	\pv (\xi^{-1} \ph) + 
	(\xi^{-1} \ph) \pv=\mathrm{id},\quad
	\ph(\xi^{-1} \pv) + (\xi^{-1} \pv) 
	\ph=\mathrm{id}.
$$ 
Thus, the only 
stalk of $\DDD$ supporting (co)homology is 
$\hat\DDD=\DDD/\im\,\xi$ 
at $x=0$.
A particularly well-behaved class of mixed complexes
is therefore formed by those which  
are 
globally contractible to $\hat\DDD$:

\begin{definition}\label{skyscrape}
We call $(\DDD,\pv,\ph)$ a 
\emph{(co)homological skyscraper}  if  
$$
	\DDD \rightarrow 
	\hat \DDD=\DDD/\im\,\xi
$$ 
is a quasiisomorphism of 
(co)chain complexes.
\end{definition}

This holds for example when  
${\DDD}=\ke \, \xi \oplus \im \, \xi$ 
so that 
$\im \,\xi \cong S^{-1}{\DDD}$, and 
in particular when $k$ is a field and  
$\xi$ is diagonalisable over $k$. 

\begin{example}\label{appendix0} 
For an example of a non-skyscraper, 
define 
$$ 
	\DDD_n :=\left\{
	\begin{array}{ll}
	R \oplus R \quad & n=0,1,\\
	0 & n>1,
	\end{array}
	\right.\qquad
	\begin{array}{l}
   \ph \colon \DDD_0 \rightarrow \DDD_1,\quad
   (r,s) \mapsto (r,s),\\
   \pv \colon \DDD_1 \rightarrow \DDD_0,\quad
   (u,v) \mapsto (0,u). 
	\end{array}
$$
The homology of $ \DDD$ is $R$ in both degrees and so
is that of $\hat\DDD$, but while the map induced on
homology by the quotient $\DDD \rightarrow \hat\DDD$
is the identity in degree 0 it vanishes in degree 1, so
$\DDD$ is not a homological skyscraper.
\end{example}

We will provide further toy examples that illustrate
the definitions and results throughout the text. 
As a first example of real interest, we mention:

\begin{example}
Consider the De Rham complex $(\DDD,\ph)$ of a compact
oriented Riemannian manifold, and let $\pv$ be the adjoint of
$\ph$ with respect to the Riemannian volume form. Then
$\xi$ is the Laplace operator and the spectral
decomposition of this elliptic (essentially) self-adjoint
operator yields  
$\DDD=\ke \xi \oplus \im \xi$, so $\DDD$ is a
skyscraper and is contractible to $\ke \xi$, the
space of harmonic forms. The results of this paper can
therefore also be viewed
as an abstraction of the Hodge theorem.  
\end{example}

\subsection{Statement of the main results}
The noncommutative
differential forms over an algebra are not a
sky\-scra\-per with respect to the De Rham differential
$d$, see Example~\ref{refquest4}, 
but they are with respect to the coboundary map 
${\phc}$
that defines cyclic homology
(cf.~Section~\ref{cyclichomology} below). Our goal
is to compare cyclic and mixed homology, and we will do
so for more general deformations of $\ph$ by
polynomials in $\xi$:

\begin{definition}
Given any mixed complex $(\DDD,\pv,\ph)$ and 
a sequence of polynomials $c_n \in k[x]$,
we define 
$$
	\phc_n:=c_n\ph_n,\quad
	\xic_n := \pv_{n+1} {\phc}_n + 
	{\phc}_{n-1} \pv_n.
$$
\end{definition}

Our main main result is the following:

\begin{theorem}\label{spektral}
If all $c_n \in k[x]$ 
are invertible in $k [\![x ]\!]$ and
$(\DDD,{\pvc},{\phc})$ is a homological skyscraper,
then for all $n \ge 0$, there are canonical short 
exact sequences
\begin{equation}\label{part1}
	0 \rightarrow \ke\, \pi _n \rightarrow
	\HH_n(\DDD,\pv,\ph) 
	\rightarrow \HH_n(\hat\DDD,\hat\pv,\hat\phc)/
	\ke\,\pi_n 
	\rightarrow 0,
\end{equation}
\begin{equation}\label{part2}
	0 \rightarrow 
	\HH_n(\DDD,\pv,\phc) \rightarrow 
	\HH_n(\hat\DDD,\hat\pv,\hat\phc)
	\rightarrow  
	\HH_{n-1}(\im\,\xi,\pv,\phc)	
	\rightarrow 
	0,
\end{equation}
where $\pi_n$ is the canonical map  
$\HH_n(\hat\DDD,\hat\pv,\hat\phc) \rightarrow 
\HH_n(\hat\DDD/\im \,\hat\pv,0,\hat\phc)$.
\end{theorem}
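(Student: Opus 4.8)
The plan is to filter the total complex $\tot(\DDD)$ by the $k[x]$-module decomposition of $\DDD$ that underlies the sheaf picture in Section~\ref{asasheaf}. Since $(\DDD,\pvc,\phc)$ is a homological skyscraper, the localisation $S^{-1}\DDD$ is contractible for the perturbed differentials $\pvc,\phc$, and $\im\,\xic$ plays the role of this localised, contractible part while $\hat\DDD=\DDD/\im\,\xic$ carries all the cyclic ($\phc$-)homology. The first move is to set up the short exact sequence of mixed complexes
\begin{equation*}
	0 \rightarrow (\im\,\xic,\pvc,\phc) \rightarrow
	(\DDD,\pvc,\phc) \rightarrow (\hat\DDD,\hat\pvc,\hat\phc) \rightarrow 0,
\end{equation*}
apply $\tot$ (which is exact since it is a direct sum of quotients, and one checks $\im\,\xic$ and $\hat\DDD$ inherit compatible $\xic$-actions), and read off the long exact sequence in mixed homology. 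The skyscraper hypothesis forces $\HH_n(\DDD,\pvc,\phc)\to\HH_n(\hat\DDD,\hat\pvc,\hat\phc)$ to be injective (this is essentially the definition unwound through $\tot$), which immediately yields \eqref{part2} with the connecting map landing in $\HH_{n-1}(\im\,\xic,\pvc,\phc)$.

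For \eqref{part1} the key observation is that the hypotheses on $(\DDD,\pvc,\phc)$ transfer to information about $(\DDD,\pv,\ph)$ because $\xic_n$ and $\xi$ differ by an operator that is invertible wherever $\xi$ is: on the stalk at $x=0$ one has $\phc_n=c_n(0)\ph_n$ with $c_n(0)$ a unit (as $c_n$ is invertible in $k[\![x]\!]$), so $\hat\phc$ and $\hat\ph$ agree up to these units, hence $\im\,\hat\pv$, $\ke\,\hat\pv$ and the complex $\hat\DDD/\im\,\hat\pv$ are literally the same whether computed with $\pv,\ph$ or with $\pvc,\phc$. The plan is then to compare the three mixed complexes $(\DDD,\pv,\ph)$, $(\hat\DDD,\hat\pv,\hat\ph)$ and $(\hat\DDD/\im\,\hat\pv,0,\hat\phc)$: the first maps to the second by the skyscraper quotient, but this quotient need \emph{not} be a quasiisomorphism for the unperturbed differentials (Example~\ref{appendix0}), and the defect is measured exactly by $\ke\,\pi_n$. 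One shows that the composite $\HH_n(\DDD,\pv,\ph)\to\HH_n(\hat\DDD,\hat\pv,\hat\ph)\xrightarrow{\pi_n}\HH_n(\hat\DDD/\im\,\hat\pv,0,\hat\phc)$ is surjective with kernel precisely $\ke\,\pi_n$ (pulled back), which rearranges into \eqref{part1}.

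The main obstacle, as I see it, is the identification of $\ke\,\pi_n$ on both sides — i.e.\ showing that the kernel of $\HH_n(\DDD,\pv,\ph)\to\HH_n(\hat\DDD/\im\,\hat\pv,0,\hat\phc)$ coincides with the $\ke\,\pi_n$ appearing in \eqref{part1}, and that the map $\HH_n(\DDD,\pv,\ph)\to\HH_n(\hat\DDD,\hat\pv,\hat\phc)$ is surjective onto the preimage of everything so that the quotient in \eqref{part1} makes sense. This requires a careful diagram chase through $\tot$ of the three-term filtration, using that passing to $\im\,\xi$ kills homology (the explicit contracting homotopies $\xi^{-1}\ph$, $\xi^{-1}\pv$ displayed before Definition~\ref{skyscrape}) to control the connecting maps; the polynomial-vs-power-series distinction enters only to guarantee these inverses exist stalkwise at $x=0$. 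I expect the bulk of the work to be bookkeeping of which differential ($\pv$, $\phc$, or $\hat\phc$) acts where, since the statement deliberately mixes perturbed and unperturbed versions, and the subtlety in Example~\ref{appendix0} warns that one cannot be cavalier about replacing $\DDD$ by $\hat\DDD$ before perturbing.
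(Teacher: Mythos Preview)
Your proposal has the right overall architecture (short exact sequences of mixed complexes, long exact sequences in mixed homology, comparison of $\hat\phc$ and $\hat\ph$ via the units $c_n(0)$), but there is a genuine gap at the point you flag as ``essentially the definition unwound through $\tot$''.

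The theorem concerns $\hat\DDD=\DDD/\im\,\xi$ and $\im\,\xi$, whereas the skyscraper hypothesis controls $\DDD/\im\,\xic$ and $\im\,\xic$. You have conflated these: you write ``$\hat\DDD=\DDD/\im\,\xic$'', but that quotient is $\bar\DDD$, not $\hat\DDD$. The skyscraper condition tells you that $(\im\,\xic,\pv)$ is acyclic; it says nothing directly about $(\im\,\xi,\pv)$, and Example~\ref{refquest4} shows $(\im\,\xi,\pv)$ can have nontrivial homology. So the injectivity of $\HH_n(\DDD,\pv,\phc)\to\HH_n(\hat\DDD,\hat\pv,\hat\phc)$ is \emph{not} immediate: it is the main content of the proof of \eqref{part2}. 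The paper bridges the gap between $\xi$ and $\xic$ by passing to $\bar\DDD=\DDD/\im\,\xic$ (this step uses the skyscraper hypothesis via Lemmas~\ref{schneideritz}--\ref{wlog}), then exploiting Proposition~\ref{relazion} --- specifically the relation $(\xic-\xi c_n)(\xic-\xi c_{n-1})=0$ --- to obtain the Cuntz--Quillen-type splitting $\bar\DDD=\ke\,\bar\xi^2\oplus\im\,\bar\xi^2$ (Lemma~\ref{directsum}). Only on $\ke\,\bar\xi^2$ does one get enough control (Lemma~\ref{imishom}: $\bar\pv\bar\xi=\bar\ph\bar\xi=0$) to prove injectivity via a concrete boundary computation (Lemma~\ref{seslemma}). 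Without invoking Proposition~\ref{relazion} somewhere, you have no mechanism relating the two spectral parameters, and your argument cannot get from $\xic$-information to $\xi$-information.

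For \eqref{part1} your instinct is correct that on the relevant quotient $\hat\phc_n=\beta_n\hat\ph_n$ with $\beta_n=c_n(0)$ a unit, but note that the degree-dependent rescaling $(x_n,x_{n-2},\ldots)\mapsto(x_n,\beta_{n-2}^{-1}x_{n-2},\ldots)$ is \emph{not} a chain map from $(\tot(\tilde\DDD),\tilde\pv+\tilde\ph)$ to $(\tot(\tilde\DDD),\tilde\pv+\tilde\phc)$: it intertwines the $\ph$-parts but not the $\pv$-parts. This is exactly why the sequence \eqref{part1} does not simply assert $\HH_n(\DDD,\pv,\ph)\cong\HH_n(\hat\DDD,\hat\pv,\hat\phc)$. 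The paper resolves this by first splitting off $\im\,\tilde\pv$ (where $\tilde\pv$ acts trivially, so only $\tilde\ph$ versus $\tilde\phc$ matters and the rescaling \emph{is} a chain map), comparing the two long exact sequences arising from $0\to\im\,\tilde\pv\to\tilde\DDD\to\tilde\DDD/\im\,\tilde\pv\to 0$, and checking (Lemma~\ref{imageiso}) that the rescaling respects images and kernels of the connecting maps. Your ``diagram chase through $\tot$ of the three-term filtration'' would have to reproduce this, and the outcome is only an identification of the outer terms of two short exact sequences, not of their middle terms --- hence the shape of \eqref{part1}.
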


The maps in (\ref{part2}) are induced by the embedding
$\im\,\xi \rightarrow \DDD$ and the quotient 
$\DDD \rightarrow \hat
\DDD$; those in (\ref{part1}) 
will be described in Section~\ref{fesbew}.

Thus if the two short exact sequences split, then
choosing a split for both yields an isomorphism
$$
	\HH_n(\DDD,\pv,\ph) \cong
	\HH_n(\DDD,\pv,\phc) \oplus 
	\HH_{n-1}(\im\,\xi,\pv,\phc).
$$

Examples~\ref{refquest4}, ~\ref{example1}, 
respectively~\ref{example2} at the end of the paper
illustrate the nontriviality of Theorem~\ref{spektral}
by exhibiting mixed complexes for which 
$\HH(\im\,\xi,\pv,\phc) \neq 0$ respectively 
$\ke\,\pi \neq 0$. 

A key step in the proof is the following computation
that relates the two spectral parameters $ \xi $ and 
$\upsilon $;
as we will explain below, this extends a result of
Cuntz and Quillen.

\begin{proposition}\label{relazion}
We have
\begin{equation}\label{watt}	
	{\xinc}=
	\xi_n c_n -\ph_{n-1} \pv_n f_n
	= \pv_{n+1}\ph_n f_n
	+\xi_n c_{n-1},
\end{equation}
where $f_n:=c_{n}-c_{n-1}$, and 
\begin{equation}\label{brexit}
	({\xinc}-\xi_nc_n)
	({\xinc}-\xi_n c_{n-1})=0.
\end{equation}
\end{proposition}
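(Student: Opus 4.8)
The plan is to reduce both identities to the single structural fact that $\xi=\pv\ph+\ph\pv$ is central in the algebra of graded endomorphisms generated by $\pv$ and $\ph$. So the first step is to record the commutation relations: using only $\pv^2=\ph^2=0$ one computes $\pv_n\xi_n=\pv_n\ph_{n-1}\pv_n=\xi_{n-1}\pv_n$ and dually $\ph_n\xi_n=\xi_{n+1}\ph_n$, whence $\pv_n\,p(\xi_n)=p(\xi_{n-1})\,\pv_n$ and $\ph_n\,p(\xi_n)=p(\xi_{n+1})\,\ph_n$ for every $p\in k[x]$, and the composites $\pv_{n+1}\ph_n$ and $\ph_{n-1}\pv_n$ commute with $p(\xi_n)$. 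In particular $\phc_n=c_n\ph_n$ is unambiguous (whether $c_n$ is read on the source or the target of $\ph_n$), and the polynomials $c_n$, $c_{n-1}$ and $f_n=c_n-c_{n-1}$ may be slid freely past $\pv$ and $\ph$, at the cost only of a harmless shift in the homological degree at which $\xi$ is evaluated. Note that the hypothesis that the $c_n$ be invertible in $k[\![x]\!]$ plays no role in this proposition.

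Next I would simply expand the definition of $\xinc$. Pulling the polynomials to the left gives $\xinc=\pv_{n+1}\phc_n+\phc_{n-1}\pv_n=c_n\,\pv_{n+1}\ph_n+c_{n-1}\,\ph_{n-1}\pv_n$. Since $\xi_n=\pv_{n+1}\ph_n+\ph_{n-1}\pv_n$, subtracting $\xi_nc_n=c_n\pv_{n+1}\ph_n+c_n\ph_{n-1}\pv_n$ cancels the first summand and leaves $\xinc-\xi_nc_n=(c_{n-1}-c_n)\ph_{n-1}\pv_n=-\ph_{n-1}\pv_nf_n$; subtracting $\xi_nc_{n-1}$ instead leaves $\xinc-\xi_nc_{n-1}=(c_n-c_{n-1})\pv_{n+1}\ph_n=\pv_{n+1}\ph_nf_n$. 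That is exactly (\ref{watt}).

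Finally, for (\ref{brexit}) I would multiply the two identities just proved: the product $(\xinc-\xi_nc_n)(\xinc-\xi_nc_{n-1})$ equals $-\ph_{n-1}\pv_nf_n\pv_{n+1}\ph_nf_n$, and commuting the interior $f_n$ past $\pv_n$ produces the factor $\pv_n\pv_{n+1}=0$ (equivalently one factors in the opposite order and uses $\ph^2=0$), so the product vanishes. I do not expect any real obstacle here; the computation is routine, and the only thing demanding attention is the degreewise bookkeeping of which graded component of $\xi$ each polynomial acts on, which the first step disposes of once and for all.
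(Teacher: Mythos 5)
Correct, and essentially the same direct computation as the paper's own proof: expand $\xic_n$, slide the polynomials in $\xi$ past $\pv$ and $\ph$, identify the two factors of (\ref{brexit}) as $-\ph_{n-1}\pv_n f_n$ and $\pv_{n+1}\ph_n f_n$, and kill their product with $\pv_n\pv_{n+1}=0$. The only difference is that you spell out the commutation relations $\pv_n\,p(\xi_n)=p(\xi_{n-1})\,\pv_n$ and $\ph_n\,p(\xi_n)=p(\xi_{n+1})\,\ph_n$, which the paper uses tacitly.
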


\subsection{Cyclic homology}\label{cyclichomology}
The most important choice for the polynomials $c_n$
leads to the definition of cyclic homology: 

\begin{definition}\label{heriot}
If  
$$
	c_n=\sum_{i=0}^n (1-x)^i=\frac{1-(1-x)^{n+1}}{x}=
	\sum_{i=0}^n (-1)^i \left(\begin{array}{c}
	\!\!\!n+1\!\!\!
	\\
	\!\!\!i+1\!\!\!
	\end{array}\right)x^i,
$$ 
then 
${\phc}_n=\ph_n \sum_{i=0}^n (\mathrm{id} - \xi_n)^i$
is called the \emph{Connes coboundary map} and
$\HH(\DDD,\pv,\phc)$ the \emph{cyclic
homology} of $\DDD$; furthermore,
$\DDD$ is said to be a \emph{cyclic complex} if  
$\xic=\pv\phc+\phc\pv=0.$
\end{definition}

Theorem~\ref{spektral} relates, in particular, the
mixed homology of a cyclic complex to its cyclic
homology, as long as the constant coefficients 
$n+1$ of $c_n$ are invertible in the ground ring
$k$. If $\xic=0$, we in fact obtain an isomorphism 
$\HH_n(\im\,\xi,\pv,\phc)=\bigoplus_{i \ge 0}
\ke \, \xi_{n-2i} \cap \im\,\xi_{n-2i}$: 

\begin{corollary}
If $(\DDD,\pv,\ph)$ is a cyclic complex of
$ \mathbb{Q} $-vector spaces, 
then there are (noncanonical) isomorphisms of vector spaces 
$$
	\HH_n(\DDD,\pv,\ph) \cong 
	\HH_n(\DDD,\pv,\phc) \oplus \bigoplus_{i \ge 0} 
	\ke\,\xi_{n-1-2i} \cap 
	\im \, \xi_{n-1-2i}.
$$
\end{corollary}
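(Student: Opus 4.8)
The strategy is to apply Theorem~\ref{spektral} with $\phc$ the Connes coboundary map of Definition~\ref{heriot} and then to compute the extra term $\HH_{n-1}(\im\,\xi,\pv,\phc)$ explicitly, exploiting that a cyclic complex over $\mathbb Q$ has $\xic=0$.

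First I would check the hypotheses of Theorem~\ref{spektral}. Over $k=\mathbb Q$ the constant term $n+1$ of $c_n$ is a unit, so $c_n$ is invertible in $\mathbb Q[\![x]\!]$. Since $(\DDD,\pv,\ph)$ is a cyclic complex, the $\xi$ attached to the mixed complex $(\DDD,\pvc,\phc)$ is $\xic=\pv\phc+\phc\pv=0$; hence $\im\,\xic=0$, the quotient $\DDD\to\DDD/\im\,\xic=\DDD$ is the identity, and $(\DDD,\pvc,\phc)$ is trivially a homological skyscraper. Theorem~\ref{spektral} therefore applies, and since we work over a field the short exact sequences (\ref{part1}) and (\ref{part2}) split; the isomorphism recorded after the theorem then reads $\HH_n(\DDD,\pv,\ph)\cong\HH_n(\DDD,\pv,\phc)\oplus\HH_{n-1}(\im\,\xi,\pv,\phc)$, so everything comes down to identifying the last summand.

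For this I would decompose $\DDD$ along the spectral parameter $\xi$. Putting $\xic=0$ into (\ref{brexit}) gives $\xi_n^2\,c_n(\xi_n)c_{n-1}(\xi_n)=0$ on $\DDD_n$; in $\mathbb Q[x]$ the polynomial $x^2c_n(x)c_{n-1}(x)$ has $0$ as a root of exact multiplicity two (as $c_n(0)=n+1$ and $c_{n-1}(0)=n$ are nonzero), so $x^2$ and $c_nc_{n-1}$ are coprime, and the primary decomposition of $\DDD_n$ as a $\mathbb Q[x]$-module (with $x$ acting by $\xi_n$) yields $\DDD_n=V_0^{(n)}\oplus V_{\neq0}^{(n)}$ with $\xi_n^2=0$ on $V_0^{(n)}$ and $\xi_n$ invertible on $V_{\neq0}^{(n)}$. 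Because $\xi$ is natural, i.e. $\pv\xi=\xi\pv$ and $\phc\xi=\xi\phc$, this splits $(\DDD,\pv,\phc)$ as a direct sum of mixed complexes, $\im\,\xi=V_{\neq0}\oplus\im(\xi|_{V_0})$, and $\ke\,\xi_n\cap\im\,\xi_n=\im(\xi_n|_{V_0^{(n)}})$ since $\xi_n^2=0$ there.

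On the invertible part I would use (\ref{watt}), which at $\xic=0$ reads $\xi_nc_n(\xi_n)=\ph_{n-1}\pv_n(1-\xi_n)^n$: inverting $\xi_n$ and using $\ph^2=0$ forces $\phc=0$ on $V_{\neq0}$, while $\pv\ph+\ph\pv=\xi$ being invertible makes $(V_{\neq0},\pv)$ contractible via $\xi^{-1}\ph$, so $\HH(V_{\neq0},\pv,\phc)=0$. On $V_0$, the same relation together with its degree-shifted versions and $\xi_n^2=0$ gives $\pv_{n+1}\ph_n=-n\,\xi_n$ and $\ph_{n-1}\pv_n=(n+1)\,\xi_n$; substituting these into $\pv_n\xi_n=\pv_n\ph_{n-1}\pv_n$ and $\ph_n\xi_n=\ph_n\pv_{n+1}\ph_n$ and dividing by $n$ and $n+1$ respectively (the one place characteristic zero is essential) yields $\pv_n\xi_n=\ph_n\xi_n=0$ on $V_0^{(n)}$, hence $\pv$ and $\phc$ both annihilate $\im(\xi|_{V_0})$. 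Thus $\phc\equiv0$ on all of $\im\,\xi$, the total differential on $\tot(\im\,\xi)$ collapses to $\pv$, and $\HH_{n-1}(\im\,\xi,\pv,\phc)=\bigoplus_{i\ge0}\HH_{n-1-2i}(\im\,\xi,\pv)=\bigoplus_{i\ge0}\bigl(\ke\,\xi_{n-1-2i}\cap\im\,\xi_{n-1-2i}\bigr)$, which with the isomorphism above gives the corollary. The main obstacle is this last block of work — extracting the square-zero/invertible dichotomy for $\xi_n$ from Proposition~\ref{relazion} and verifying that $\pv$ and $\phc$ genuinely kill $\im\,\xi$ once $\xic=0$; the reduction via Theorem~\ref{spektral} and the splitting over $\mathbb Q$ are purely formal.
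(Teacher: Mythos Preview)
Your argument is correct and follows essentially the same route as the paper. The paper deduces the corollary from Theorem~\ref{spektral} together with the identification $\HH_{n-1}(\im\,\xi,\pv,\phc)\cong\bigoplus_{i\ge 0}\ke\,\xi_{n-1-2i}\cap\im\,\xi_{n-1-2i}$ stated just before the corollary and justified in the Example following the proof of (\ref{part2}); your decomposition $V_0\oplus V_{\neq 0}=\ke\,\xi^2\oplus\im\,\xi^2$ is precisely Lemma~\ref{directsum} (specialised to $\bar\DDD=\DDD$ since $\xic=0$), and your formulas $\pv_{n+1}\ph_n=-n\xi_n$, $\ph_{n-1}\pv_n=(n+1)\xi_n$, $\pv\xi=\ph\xi=0$ on $\ke\,\xi^2$ are the content of Lemma~\ref{imishom} with $\beta_n=n+1$.

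The only organisational difference is that the paper obtains $\HH_{n-1}(\im\,\xi,\pv,\phc)\cong\bigoplus_i\III_{n-1-2i}$ indirectly, as the cokernel in the short exact sequence (\ref{newses}) compared with (\ref{part2}), whereas you compute it head-on by decomposing $\im\,\xi=(\im\,\xi\cap\ke\,\xi)\oplus\im\,\xi^2$, showing $\pv$ and $\phc$ vanish on the first summand while the second is contractible. Both routes rest on the same lemmas; yours is slightly more direct in this cyclic special case, while the paper's is set up for the general skyscraper situation.
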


This applies in particular to the
noncommutative differential forms over a unital
associative algebra $A$. Here $(\DDD,\pv)$ is the
normalised Hochschild chain complex of $A$, that is,
\begin{equation}\label{ncdiffforms}
	\DDD_n:=A \otimes_k (A/k)^{\otimes_k n} 
\end{equation}
where $k$ is embedded into $A$ as scalar multiples of
the unit element, and $\pv_n$ is induced by the map
\begin{align*}
 a_0 \otimes _k a_1 \otimes _k \cdots 
	\otimes _k a_n 
& \mapsto  a_0 a_1 \otimes _k a_2 \otimes _k \cdots 
	\otimes _k a_n\\
	& - a_0 \otimes _k a_1a_2 \otimes _k
\cdots 
	\otimes _k a_n + \ldots\\ 
& +(-1)^{n-1}
	a_0 \otimes _k a_1 \otimes _k \cdots 
	\otimes _k a_{n-1} a_n\\
& +(-1)^{n}
	a_na_0 \otimes _k a_1 \otimes _k \cdots 
	\otimes _k a_{n-1}.
\end{align*}
The coboundary map is the noncommutative De Rham
differential $\ph_n$ which is induced by
\begin{equation}\label{extrdeg}
	\ph_n (a_0 \otimes_k \cdots \otimes _k a_n):=
	1 \otimes _k a_0 \otimes _k \cdots \otimes _k
	a_n. 
\end{equation}
This is a cyclic complex, and $\HH(\DDD,\pv,\phc)$ is 
the \emph{cyclic homology} $HC(A)$ of the algebra $A$
\cite{connesihes,loday}.

Considering again a general mixed complex
$(\DDD,\pv,\ph)$, 
the formulas from Proposition~\ref{relazion} reduce 
with $c_n$ as in Definition~\ref{heriot}
to
\begin{equation}\label{cuqu1}
	T_n=(\mathrm{id} -\pv_{n+1} \ph_n) \kappa_n^n,\quad
	\kappa_n^{n+1}=T_n(\mathrm{id}-\ph_{n-1} \pv_n),
\end{equation}
\begin{equation}\label{cuqu2}
	(T_n-\kappa_n^{n+1})(T_n-\kappa_n^n)=0,
\end{equation}
where 
\begin{equation}\label{cuqu3}
	\kappa_n := \mathrm{id}-\xi_n,\qquad
	T_n := \mathrm{id} - \xinc
\end{equation}
are the \emph{Karoubi operators} of the two mixed
complexes $(\DDD,\pv,\ph)$ and $(\DDD,\pv,\phc)$, respectively. 
This generalises \cite[Proposition~3.1]{cq} to
arbitrary mixed complexes and in particular to all
cyclic ones, where $T=\mathrm{id} $ 
(Cuntz and Quillen only considered the
example of noncommutative differential forms).

Our original motivation for the present work was to
extend the results of Cuntz and Quillen and related
work to the so-called twisted cyclic homology
introduced by Kustermans, Murphy and Tuset 
\cite{kmt}, see Example~\ref{exncdiff} for details. 
This and the Hopf-cyclic homology
discovered by
Connes-Moscovici \cite{cm} and Crainic \cite{crainic}
can be viewed as a special case of
Hopf-cyclic homology with coefficients in anti
Yetter-Drinfeld modules, as introduced by Hajac,
Khalkhali, Rangipour and Sommerh\"auser \cite{hkrs}.
Early versions and special instances of the main
results of our paper were key tools in the computation
of the twisted cyclic homology of quantum $SL(2)$ due
to Hadfield and the first author \cite{tomuli}.
Another source of motivation was the work of Shapiro
\cite{shapiro} who investigated the approach via
noncommutative differential forms.
Although the results as formulated
here are fairly technical, we felt it worthwhile to
present them in full generality from the viewpoint of mixed
complexes and hope they will find new applications in
other settings in the future.

\section{Proofs and further material}
\subsection{Quasiisomorphisms}
Before beginning the proofs of the main results, we
remark that what one should call a quasiisomorphism (or
weak equivalence) of mixed complexes is a subtle
question that depends on one's aims 
(see e.g.~\cite[Section~2.5.14]{loday} and
\cite{dk2} for two different choices). 
We will, however, only encounter the simple
case that is covered by the following proposition, which
is a straightforward generalisation of
\cite[Corollary~2.2.3]{loday}:

\begin{proposition}\label{banalitaet}
A morphism   
$ \varphi : (\DDD,\pv,\ph) \rightarrow 
(\DDD',\pv',\ph')$ of mixed
complexes with
$\pv\ph+\ph\pv=\pv'\ph'+\ph'\pv'=0$ 
induces an isomorphism 
on homology if and only if it
induces an isomorphism 
on mixed homology.
\end{proposition}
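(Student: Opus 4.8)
The plan is to reproduce, in this slightly more general setting, the classical argument for ordinary mixed complexes (\cite[Corollary~2.2.3]{loday}); the hypothesis $\pv\ph+\ph\pv=0$ is exactly what is needed to make it work. First I would note that $(\pv+\ph)^2=\pv^2+(\pv\ph+\ph\pv)+\ph^2=\xi$, so that $\xi=0$ is precisely the condition under which $(\tot(\DDD),\pv+\ph)$ is a genuine chain complex, and that in this case $\hat\DDD=\DDD$, so that
$$
	\tot_n(\DDD)=\bigoplus_{i\ge0}\DDD_{n-2i},\qquad
	(\pv+\ph)(x_0,x_1,x_2,\ldots)=(\pv x_0+\ph x_1,\ \pv x_1+\ph x_2,\ \ldots);
$$
here ``homology'' means the homology $H_*(\DDD,\pv)$ of the underlying chain complex, the cochain version being entirely symmetric.

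The key step is to exhibit the short exact sequence of chain complexes
$$
	0\rightarrow(\DDD,\pv)\xrightarrow{\I}(\tot(\DDD),\pv+\ph)
	\xrightarrow{S}(\tot(\DDD)[2],\pv+\ph)\rightarrow0,
$$
where $\I$ includes $\DDD_n$ as the $i=0$ summand of $\tot_n(\DDD)$ --- a subcomplex, since by the formula above the differential does not create an $i\ge1$ component out of it --- and $S$ forgets that summand, the quotient being the total complex of $\DDD$ with all degrees raised by two, equipped with its own $\tot$-differential (a one-line check on the componentwise formula). The associated long exact sequence is the $SBI$ sequence
$$
	\cdots\rightarrow H_n(\DDD,\pv)\xrightarrow{\I_*}\HH_n(\DDD)
	\xrightarrow{S_*}\HH_{n-2}(\DDD)\xrightarrow{\partial}H_{n-1}(\DDD,\pv)\rightarrow\cdots,
$$
and, since $\varphi$ commutes with $\pv$, $\ph$, $\I$ and $S$, it induces a morphism from this sequence for $\DDD$ to the one for $\DDD'$.

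The statement then follows from the five lemma applied to the resulting ladder. If $\varphi$ is an isomorphism on $\HH_*$ in all degrees, all the vertical arrows at the mixed-homology terms are isomorphisms, and the five lemma around each $H_n(\DDD,\pv)$ --- whose four neighbours in the sequence are mixed-homology terms --- shows that $\varphi$ is an isomorphism on ordinary homology. Conversely, if $\varphi$ is an isomorphism on ordinary homology in all degrees, I would show by induction on $n$ that it is an isomorphism on $\HH_n(\DDD)$: both $\HH_n$ and $H_n(\DDD,\pv)$ vanish for $n<0$ since $\DDD_n=0$ there, and for the inductive step the five lemma around $\HH_n(\DDD)$ uses the inductive hypothesis for its neighbours $\HH_{n-1}(\DDD)$ and $\HH_{n-2}(\DDD)$ together with the hypothesis for $H_n(\DDD,\pv)$ and $H_{n-1}(\DDD,\pv)$.

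There is no real obstacle here. The only points that need a moment's attention are the degree bookkeeping in the long exact sequence --- which is what forces the ``$\Rightarrow$'' direction (from ordinary to mixed homology) to go by induction while the ``$\Leftarrow$'' direction is immediate --- and the identification of the quotient complex $\tot(\DDD)/\I(\DDD)$ with the twice-shifted total complex, both of which are read off directly from the explicit formula for $\pv+\ph$ on $\tot(\DDD)$.
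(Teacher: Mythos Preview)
The paper does not actually supply a proof of this proposition; it is stated as ``a straightforward generalisation of \cite[Corollary~2.2.3]{loday}'' and left at that. Your argument is precisely the standard $SBI$-sequence and five-lemma proof that establishes Loday's corollary, carried over verbatim once one observes that the hypothesis $\xi=0$ makes $\hat\DDD=\DDD$ and $(\tot(\DDD),\pv+\ph)$ a genuine chain complex; so your proof is correct and is exactly what the paper has in mind.
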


\begin{example}
Observe that the analogue of the proposition for 
cohomological quasiisomorphisms fails: 
consider for example the two mixed complexes 
$$
	\DDD_n:=
	\left\{
	\begin{array}{ll}
	\mathbb{C} & \quad n=0,\\
	0 & \quad n>0,
	\end{array}
	\right.\qquad
	\DDD'_m:=\mathbb{C},\quad m \in \mathbb{N}  
$$
with $\pv_n=\ph_n=\pv_n'=0$ and 
$$
	\ph'_n:=
	\left\{
	\begin{array}{ll}
	0 & \quad n=2k,\\
	\mathrm{id}  & \quad n=2k+1,
	\end{array}
	\right.\qquad
	k \in \mathbb{N}. 
$$
We have 
$$
	\HH^n(\DDD,0)=\DDD_n \cong 
	\HH^n(\DDD',\ph'),
$$ 
so the map 
$$
	\varphi _n :=
	\left\{
	\begin{array}{ll}
	\mathrm{id}  & \quad n=0,\\
	0  & \quad n>0
	\end{array}
	\right.
$$
is a quasiisomorphism of cochain complexes:
$$
   \xymatrix{
	\ldots \ar@<1mm>[r]^{0} & 
	0
	\ar@<1mm>[r]^{\,\,{0}}
	\ar@<1mm>[l]^{0} \ar[d]^0 & 
	0 
	\ar@<1mm>[r]^{\!\!{0}}
	\ar@<1mm>[l]^{\,\,{0}} 
	\ar[d]^0 & 
	0 
	\ar@<1mm>[r]^{\,\,\,\,0}
	\ar@<1mm>[l]^{\!\! 0} 
	\ar[d]^{0} & 
	0 
	\ar@<1mm>[r]^{\!\!{0}}
	\ar@<1mm>[l]^{\,\,{0}} 
	\ar[d]^0 & 
	\mathbb{C} 
	\ar@<1mm>[r]^{\,\,\,\,0}
	\ar@<1mm>[l]^{\!\! 0} 
	\ar[d]^{\mathrm{id}} & 
	0
	\ar@<1mm>[l]^{\,\,\,\,0}\\
	\ldots \ar@<1mm>[r]^{0} 
	 & 
	\mathbb{C} 
	\ar@<1mm>[r]^{\,\,{0}}
	\ar@<1mm>[l]^{0} & 
	\mathbb{C}  
	\ar@<1mm>[r]^{\!{0}}
	\ar@<1mm>[l]^{\,{\mathrm{id} }} & 
	\mathbb{C} 
	\ar@<1mm>[r]^{\,\,{0}}
	\ar@<1mm>[l]^{0} & 
	\mathbb{C}  
	\ar@<1mm>[r]^{\!{0}}
	\ar@<1mm>[l]^{\,{\mathrm{id} }} & 
	\mathbb{C} 
	\ar@<1mm>[r]^{\,\,\,\,0}
	\ar@<1mm>[l]^{\!\!0} & 0
	\ar@<1mm>[l]^{\,\,\,\,0}
	}
$$ 

However, 
one obtains by direct inspection 
$$
	\HH_n(\DDD',0,\pv') \cong \mathbb{C} ,\qquad
	\HH_n(\DDD,0,0) \cong
	\left\{
	\begin{array}{ll}
	\mathbb{C}  & \quad n=2k,\\
	0  & \quad n=2k+1,
	\end{array}
	\right.\qquad
	k \in \mathbb{N}. 
$$
\end{example}
\begin{remark}
The moral is that, although the r\^oles of $\ph$ and 
$\pv$ are entirely symmetric in $\DDD$, this symmetry is broken  
in the definition of mixed homology, as the
action of $\ph$ is somewhat artificially cut off 
on $\hat\DDD_n \subset \tot_n(\DDD)$.
This changes when one considers the $
\mathbb{Z}_2$-graded periodic homology theories;
however, then there are two variants:  
$$
	\tot_s^{\mathrm{per},\Pi}(\DDD):=
\prod_{j \in \mathbb{N}} 
	\hat\DDD_{s+2j} ,\quad
	\tot_s^{\mathrm{per},\oplus}(\DDD):=
	\bigoplus_{j \in \mathbb{N}} 
	\hat\DDD_{s+2j} ,\quad
s \in
	\mathbb{Z}_2. 
$$
Proposition~\ref{banalitaet} holds in the same way for 
$\HH^{\mathrm{per},\Pi}$, but it is cohomological
quasiisomorphisms rather than homological ones which
induce isomorphisms in $\HH^{\mathrm{per},\oplus}$.
\end{remark}

\subsection{The proof of Proposition~\ref{relazion}}
We now develop the theory that
will lead to a proof of Theorem~\ref{spektral}. The steps are 
illustrated using the example of cyclic
homology, and the first one is the proof of
Proposition~\ref{relazion} in which we relate
the maps $\xi$ and ${\xic}$:

\begin{proof}[Proof of Proposition~\ref{relazion}]
The first equation is obtained by straightforward computation:
\begin{align*}
	{\xinc}
&= {\pvc}_{n+1}{\phc}_n+
	{\phc}_{n-1}{\pvc}_n 
= \pv_{n+1} c_n \ph_n+
	c_{n-1} \ph_{n-1} \pv_n 
	\\
&= \pv_{n+1} \ph_n c_n+
	\ph_{n-1} \pv_n c_{n-1} 
= (\xi_n-\ph_{n-1} \pv_n) c_n
	+\ph_{n-1} \pv_n c_{n-1}
	\\
&= \pv_{n+1}\ph_n c_n
	+(\xi_n-\pv_{n+1}\ph_n) c_{n-1}.
\end{align*}
Thus 
the first factor in (\ref{brexit}) equals $-\ph_{n-1}\pv_n
f_{n+1}$ and the second one $\pv_{n+1}\ph_n f_n$, so
their product equals 0 as $\pv_n\pv_{n+1}=0$. 
\end{proof}
\begin{remark}
If one perturbs not just $\ph_n$ to ${\phc}_n=c_n\ph_n$ 
but also $\pv_n$ to $D_n:=a_n\pv_n$ for some polynomials 
$a_n \in k[x]$, then one has  
$$	
	\phc_{n-1} D_n+ 
	D_{n+1} \phc_n =
	\xi_n a_{n+1}c_n -\ph_{n-1} \pv_n f_n
	= \pv_{n+1}\ph_n f_n
	+\xi_n a_nc_{n-1}
$$
with 
$f_n=a_{n+1}c_n-a_nc_{n-1}$. That is, one obtains
$\xic$ but for $\ph$ perturbed by the
polynomials 
$a_{n+1}c_n$ and in this sense it is sufficient to
focus on deformations of $\ph$ alone.  
\end{remark}

\begin{example}
In the case of cyclic homology
(cf.~Section~\ref{cyclichomology}), we obtain 
$$
	c_n=\frac{1-y^{n+1}}{1-y},\quad
	f_n=y^n,\quad
	y:=1-x.
$$ 
Inserting this into the formulas in Proposition~\ref{relazion} yields the
formulas (\ref{cuqu1})-(\ref{cuqu3}) from
Section~\ref{cyclichomology}.
\end{example}

\subsection{The quasiisomorphism $\DDD \rightarrow \bar\DDD$}
As part of the assumptions of Theorem~\ref{spektral},  
$(\DDD,\pv,\phc)$ is a 
homological skyscraper, so 
$(\im\,{\xic},{\pvc})$ 
has trivial homology. 
We now use this fact to relate the mixed homology of $\DDD$ 
to that of the quotients 
$$
	\bar\DDD:=\DDD/\im\,\xic,\qquad
	\tilde\DDD:=\DDD/(\im\,\xi+\im\,\xic).
$$

In the sequel, $\bar\ph,\bar\pv,\bar\xi$ and
$\tilde\pv,\tilde\ph,\tilde\xi$ refer to 
the structure maps on $\bar\DDD$ respectively 
$\tilde \DDD$.

\begin{lemma}\label{schneideritz}
$(\im\,\xi \cap \im\,\xic,\pv)$ has trivial
homology.
\end{lemma}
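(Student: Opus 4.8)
The plan is to exploit the identity \eqref{brexit} from Proposition~\ref{relazion}, which says that on $\DDD_n$ we have a factorisation
\[
	(\xic_n - \xi_n c_n)(\xic_n - \xi_n c_{n-1}) = 0,
\]
together with the fact that $c_n$ is invertible in $k[\![x]\!]$. Here the subtlety is that $\xi = x$ acts, so $c_n$ need not be invertible as an operator on $\DDD$; but because $(\DDD,\pvc,\phc)$ is a homological skyscraper, the only part of $\DDD$ that matters homologically is where $\xic$ is ``small'', and on the relevant submodule $\im\,\xi \cap \im\,\xic$ the operators $c_n$ and $c_{n-1}$ can be inverted via power series in $\xi$, since $\xi$ is locally nilpotent there (or at least its powers shrink against the image filtration). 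So the first step is to set $W_n := \im\,\xi_n \cap \im\,\xic_n$ and observe that $(W_\bullet,\pv)$ is a subcomplex of $(\DDD,\pv)$: indeed $\pv$ commutes with $\xi$ hence maps $\im\,\xi$ to $\im\,\xi$, and $\pv\xic = -\xic\pv$ (since $\xic = \pvc\phc + \phc\pvc$ graded-commutes past $\pv$ by the usual computation $\pv\xic - \xic\pv = \pv\pvc\phc - \phc\pvc\pv = 0$ using $\pv\pvc = \pvc\pv = 0$... more precisely $\xic$ is a chain map up to sign), so $\pv$ preserves $\im\,\xic$ as well.

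Next I would show that on $W_n$ the map $\xi_n$ is ``invertible relative to $\pv$'' in a way that forces the homology to vanish. Concretely: since $(\DDD,\pvc,\phc)$ is a homological skyscraper, $(\im\,\xic,\pvc)$ is contractible, with contracting homotopy $h := \xic^{-1}\phc$ on $\im\,\xic$ (mirroring the localisation argument in the introduction). On $W_n \subseteq \im\,\xic_n$ one then has $\xic$ invertible; and by \eqref{brexit}, on $W_n$ the operator $\xic_n$ agrees with $\xi_n c_n$ modulo the image of $\xic_n - \xi_n c_{n-1}$, which one checks lands appropriately. The cleanest route: restrict \eqref{brexit} and the first equation of \eqref{watt} to $W_\bullet$, multiply through by the power-series inverses of $c_n$, and deduce that on $W_\bullet$ the chain complex $(W,\pv)$ carries a contracting homotopy built from $\ph$, $\xi^{-1}$ and the $c_n^{-1}$. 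I expect the homotopy to be of the shape $s_n = c_n^{-1}\xi_n^{-1}\ph_n$ or a small correction thereof, and the identity $\pv s + s \pv = \mathrm{id}$ on $W$ to drop out of \eqref{watt} after clearing denominators.

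The key structural input making this legitimate is that $\xi$ is invertible on $\im\,\xi$ \emph{in the weak sense} needed here — really one uses that $\im\,\xi \cap \ke\,\xi$-type obstructions do not interfere because we have intersected with $\im\,\xic$ and $(\im\,\xic,\pvc)$ is already exact. So I would actually organise the proof around the \emph{pair} of complexes: first establish $(\im\,\xic,\pvc)$ is exact (immediate from the skyscraper hypothesis), then transfer exactness to $(\im\,\xic,\pv)$ using \eqref{watt} to compare $\pv$ and $\pvc$ on this submodule (they differ by $c$-twisting, and $c$ is a unit power series, so the complexes $(\im\,\xic,\pv)$ and $(\im\,\xic,\pvc)$ have the same homology), and finally restrict to the further intersection with $\im\,\xi$, which is a subcomplex on which one of the two factors in \eqref{brexit} vanishes, pinning $\xic$ to $\xi c_n$ and hence making the induced contracting homotopy explicit.

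The main obstacle I anticipate is precisely the bookkeeping of ``invertibility of $c_n$'': one must be careful that inverting $c_n(\xi)$ as a power series produces a genuine operator on the relevant module, i.e.\ that $\xi$ acts locally nilpotently (or pro-nilpotently) on $\im\,\xi \cap \im\,\xic$, or else argue that one only ever needs finitely many terms because of how $\pv$ lowers degree and $\DDD$ is nonnegatively graded. A clean way around this is to localise: work in $S^{-1}\DDD$ where $\xi$ is a unit, note that $\im\,\xic \cap \im\,\xi$ maps into the contractible $S^{-1}\DDD$, and deduce triviality of homology from the long exact sequence relating $\DDD$, $\hat\DDD$ and $\im\,\xi$ — but that risks circularity with Theorem~\ref{spektral}, so I would prefer the direct homotopy. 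The secondary nuisance is sign conventions in checking $\pv$ preserves $\im\,\xic$ and in verifying $\pv s + s\pv = \mathrm{id}$; these are routine given \eqref{watt} but must be done with the degree-shifts tracked correctly.
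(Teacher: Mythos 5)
You have the right two ingredients in hand --- the exactness of $(\im\,\xic,\pv)$ coming from the skyscraper hypothesis, and the identities of Proposition~\ref{relazion} --- but the way you propose to combine them does not work. Your plan hinges on inverting $\xi$ and the operators $c_n(\xi)$ on the relevant modules (the proposed homotopy $s=c_n^{-1}\xi^{-1}\ph$, and already the claimed contraction $h=\xic^{-1}\phc$ of $\im\,\xic$, which presumes $\xic$ invertible there --- a strictly stronger condition than the skyscraper hypothesis, cf.\ Example~\ref{appendix0}). Neither inverse exists in general: invertibility of $c_n$ in $k[\![x]\!]$ only says its constant coefficient is a unit, and turning $c_n(\xi)^{-1}$ into an actual operator would need $\xi$ to act pro-nilpotently, which is nowhere assumed; your suggested escape (``only finitely many terms are needed because $\pv$ lowers degree'') fails because $\xi$ preserves the degree, so the power series never truncates. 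Worse, $\xi$ is genuinely not injective on $\im\,\xi$: the module $\im\,\xi\cap\ke\,\xi$ is precisely what survives as the third term of (\ref{part2}) and is nonzero in Example~\ref{refquest4}, so any argument treating $\xi$ as invertible on its image proves too much. Intersecting with $\im\,\xic$ does not remove this obstruction, and the sentence asserting that ``$\im\,\xi\cap\ke\,\xi$-type obstructions do not interfere'' is exactly the missing step. The localisation fallback fails for the same reason: contractibility of $S^{-1}\DDD$ says nothing about a submodule of $\DDD$ that merely maps into it.

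The paper's proof needs no inverses and no homotopy. Take a cycle $x\in(\im\,\xi\cap\im\,\xic)_n$ with $\pv_n x=0$. Since $(\im\,\xic,\pv)$ has trivial homology (your first, correct, step), there is $y\in\DDD_{n+1}$ with $x=\pv_{n+1}\xic_{n+1}y$. The second expression in (\ref{watt}), taken in degree $n+1$, gives $\xic_{n+1}=\pv_{n+2}\ph_{n+1}f_{n+1}+\xi_{n+1}c_n$, whence $\pv_{n+1}\xic_{n+1}=\pv_{n+1}\xi_{n+1}c_n$ because $\pv_{n+1}\pv_{n+2}=0$. Thus $x=\pv_{n+1}(\xi_{n+1}c_ny)$, which places $x$ in $\pv\,(\im\,\xic\cap\im\,\xi)_{n+1}$, and the homology vanishes. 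So Proposition~\ref{relazion} enters only through this single operator identity (killing one of the two summands with $\pv^2=0$), not through any spectral inversion.
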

\begin{proof}
If $x \in (\im\,\xic \cap \im\,\xi)_n$ and 
$\pv_n x=0$, then as $(\im\,\xic,\pv)$ has no
homology, 
there is $y \in \DDD_{n+1}$ with 
$x=\pv_{n+1} \xinnc y$. By
Proposition~\ref{relazion}, this
equals $\pv_{n+1}\xi_{n+1} c_ny$, so $x \in \pv(\im\,\xic
\cap \im\,\xi)_{n+1}$. 
\end{proof}

\begin{lemma}\label{wlog}
The canonical quotient  
$
	\hat\DDD \rightarrow   
	\tilde\DDD
$
is a quasiisomorphism of chain complexes. In
particular, the quotient $(\DDD,\pv,\ph) \rightarrow 
(\bar\DDD,\bar\pv,\bar\ph)$ induces 
isomorphisms 
$
	\HH(\DDD,\pv,\ph) =
	\HH(\hat\DDD,\hat\pv,\hat\ph) \cong 
	\HH(\bar\DDD,\bar\pv,\bar\ph) =
	\HH(\tilde\DDD,\tilde\pv,\tilde\ph).
$
\end{lemma}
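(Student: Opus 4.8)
The plan is to exhibit $\tilde\DDD$ as the quotient of $\hat\DDD$ by a chain subcomplex with vanishing homology; then $\hat\DDD\to\tilde\DDD$ is a quasiisomorphism of chain complexes by the long exact sequence, and Proposition~\ref{banalitaet} upgrades this to the claimed isomorphism of mixed homologies. First I would record the elementary stability properties that make all the quotients in sight mixed complexes: $\pv$ commutes with $\xi$ and with $\xic=\pv\phc+\phc\pv$ (the latter already because $\pv^2=0$), $\ph$ commutes with $\xi$, and, using both expressions for $\xinc$ in~(\ref{watt}) together with $\ph\ph=0$, one checks $\ph_n\xinc=\xinnc\ph_n$. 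Hence $\im\,\xi$, $\im\,\xic$, their sum and their intersection are all mixed subcomplexes of $\DDD$; in particular $\bar\DDD$, $\hat\DDD$, $\tilde\DDD$ are mixed complexes, $\xi$ acts as $0$ on $\hat\DDD$ and on $\tilde\DDD$ so that $\hat\pv\hat\ph+\hat\ph\hat\pv=0=\tilde\pv\tilde\ph+\tilde\ph\tilde\pv$, and $\tilde\DDD=\bar\DDD/\im\,\bar\xi=\widehat{\bar\DDD}$ is precisely the quotient occurring in the definition of the mixed homology of $\bar\DDD$.

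Next, since $\im\,\xi\subseteq\im\,\xi+\im\,\xic$, the quotient maps assemble into a short exact sequence of chain complexes $0\to(\im\,\xi+\im\,\xic)/\im\,\xi\to\hat\DDD\to\tilde\DDD\to 0$, and the second isomorphism theorem identifies the left-hand term with $\im\,\xic/(\im\,\xi\cap\im\,\xic)$ compatibly with $\pv$. Now $(\im\,\xic,\pv)$ has trivial homology --- this is the skyscraper hypothesis on $(\DDD,\pv,\phc)$, as already noted --- and $(\im\,\xi\cap\im\,\xic,\pv)$ has trivial homology by Lemma~\ref{schneideritz}. The long exact sequence of $0\to\im\,\xi\cap\im\,\xic\to\im\,\xic\to\im\,\xic/(\im\,\xi\cap\im\,\xic)\to 0$ then forces $\im\,\xic/(\im\,\xi\cap\im\,\xic)$, hence $(\im\,\xi+\im\,\xic)/\im\,\xi$, to have trivial homology; inserting this into the first short exact sequence shows that $\hat\DDD\to\tilde\DDD$ is a quasiisomorphism of chain complexes.

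It then remains to chase the four homology groups. By definition of mixed homology, $\HH(\DDD,\pv,\ph)=\HH(\hat\DDD,\hat\pv,\hat\ph)$, and, using $\tilde\DDD=\widehat{\bar\DDD}$, also $\HH(\bar\DDD,\bar\pv,\bar\ph)=\HH(\tilde\DDD,\tilde\pv,\tilde\ph)$; Proposition~\ref{banalitaet}, applied to the morphism of mixed complexes $\hat\DDD\to\tilde\DDD$ which by the previous paragraph is a quasiisomorphism of the underlying chain complexes, then yields $\HH(\hat\DDD,\hat\pv,\hat\ph)\cong\HH(\tilde\DDD,\tilde\pv,\tilde\ph)$. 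Composing these three steps, and noting that at the level of complexes $\DDD\to\hat\DDD\to\tilde\DDD$ coincides with $\DDD\to\bar\DDD\to\tilde\DDD$, the resulting isomorphism $\HH(\DDD,\pv,\ph)\cong\HH(\bar\DDD,\bar\pv,\bar\ph)$ is the one induced by the quotient $(\DDD,\pv,\ph)\to(\bar\DDD,\bar\pv,\bar\ph)$. The only genuinely fiddly point is the bookkeeping of the first paragraph --- keeping the three quotients and their structure maps apart and verifying the $\ph$-stability of $\im\,\xic$ from~(\ref{watt}) --- while the homological content is just two applications of the long exact sequence, so I do not anticipate a serious obstacle.
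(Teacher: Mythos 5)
Your proof is correct and follows the paper's argument exactly: identify the kernel of $\hat\DDD\to\tilde\DDD$ with $\im\,\xic/(\im\,\xi\cap\im\,\xic)$, kill its homology via the skyscraper hypothesis and Lemma~\ref{schneideritz}, and then invoke Proposition~\ref{banalitaet}. The only difference is that you spell out the routine verifications (stability of $\im\,\xic$ under $\ph$, the second isomorphism theorem, the long exact sequences) that the paper leaves implicit.
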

\begin{proof}
We need to show that the kernel 
$$
	\im\,\xic/(\im\,\xi \cap \im\,\xic)
$$ 
has trivial homology. However, 
this follows from the fact that  
$\im\,\xic$ 
and $\im\,\xi \cap \im\,\xic$
have trivial homology ($\im \, \xic$ has trivial homology
by the assumption
that $(\DDD,\pv,\phc)$ is a homological skyscraper,
and $\im\,\xi \cap \im\,\xic$ has trivial homology by Lemma~\ref{schneideritz}). 
The second claim now follows from Proposition~\ref{banalitaet}. 
\end{proof}

\begin{example}\label{exncdiff}
If $ \DDD$ is a cyclic complex  and 
$\HH(\DDD,\pv,\phc)$ is its cyclic homology 
as in Definition~\ref{heriot}, then
$\xic=0$ and $\DDD=\bar\DDD$, 
so the above lemma becomes trivial. 
However, let $A$ be an associative algebra and 
${}_ \sigma A$ be the $A$-bimodule
which is $A$ as a right $A$-module but whose
left action is given by $x \triangleright y:=\sigma (x)
y$ for some algebra endomorphism $ \sigma $.
Now consider the 
noncommutative differential forms over 
$A$ as defined in
(\ref{ncdiffforms}), but with the
boundary map $\pv$ that computes the Hochschild homology  
of $A$ with coefficients in ${}_\sigma
A$. Explicitly, $\pv$ is  
given by
\begin{align*}
 a_0 \otimes _k a_1 \otimes _k \cdots 
	\otimes _k a_n 
& \mapsto  a_0 a_1 \otimes _k a_2 \otimes _k \cdots 
	\otimes _k a_n\\
	& - a_0 \otimes _k a_1a_2 \otimes _k
\cdots 
	\otimes _k a_n + \ldots\\ 
& +(-1)^{n-1}
	a_0 \otimes _k a_1 \otimes _k \cdots 
	\otimes _k a_{n-1} a_n\\
& +(-1)^{n}
	\sigma (a_n)a_0 \otimes _k a_1 \otimes _k \cdots 
	\otimes _k a_{n-1}.
\end{align*}
With $\ph_n$ from (\ref{extrdeg}) and $c_n$ as in
Definition~\ref{heriot}, the operator $\phc$ is
induced by
$$
	\phc_n(a_0 \otimes _k \cdots 
	\otimes _k a_n)=
	\sum_{i=0}^n
	1 \otimes _k t^i (a_0 \otimes _k \cdots 
	\otimes _k a_n),
$$
where 
$$
	t(a_0 \otimes _k \cdots \otimes _k a_n) :=
	(-1)^n \sigma (a_n) \otimes _k 
	a_0 \otimes _k \cdots \otimes _k 
	a_{n-1}.
$$
In this case, $\HH(\DDD,\pv,\phc)$ is the \emph{twisted cyclic
homology} $HC^\sigma(A)$ of
$A$ that was first considered by Kustermans, Murphy and
Tuset \cite{kmt}. 
The operators $ \xi $ and $ \upsilon $ are given in
this case by
\begin{align*}
	\xi (a_0 \otimes _k \cdots \otimes _k a_n)
&=
	a_0 \otimes _k \cdots \otimes _k a_n 
	-t(a_0 \otimes _k \cdots \otimes _k a_n)	
	\\
&\quad
	+(-1)^n 1 \otimes _k 
	\sigma (a_n)a_0
	\otimes _k \cdots 
	\otimes _k a_{n-1}\\
	\upsilon (a_0 \otimes _k 
	\cdots \otimes _k a_n)
&=a_0 \otimes _k \cdots \otimes _k 
	a_n -\sigma (a_0) \otimes _k 
	\cdots \otimes _k
	\sigma (a_n).
\end{align*}
In particular, $\DDD$ is cyclic if and only if 
$ \sigma = \mathrm{id} $.

To generalise the theory of Cuntz and Quillen 
(which concerns the case where $ \sigma = \mathrm{id}
$) to this setting was one of
our original aims, motivated in particular by 
Shapiro's extension \cite{shapiro} of Karoubi's
noncommutative De Rham theory.
\end{example}
 
\subsection{The quasiisomorphism 
$ \ke \,\bar\xi^2 \rightarrow \bar\DDD$}
From now on, we will study the mixed complex 
$\bar\DDD$ in further detail. This is where the second
main assumption of Theorem~\ref{spektral} becomes
relevant, namely that the constant coefficients of the
polynomials $c_n$ are all invertible. We tacitly assume
this for the rest of the paper.

\begin{lemma}\label{directsum}
We have 
\begin{equation}\label{kersims}
	\bar\DDD=\ke\,\bar\xi^2 \oplus \im\,\bar\xi^2.
\end{equation}
\end{lemma}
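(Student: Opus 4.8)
The plan is to produce, in each homological degree $n$, an idempotent $e_n$ in the commutative subring $k[\bar\xi_n]\subseteq\mathrm{End}(\bar\DDD_n)$ whose image is $\im\,\bar\xi^2$ and whose kernel is $\ke\,\bar\xi^2$ in that degree. Since $\xi$ commutes with $\pv$ and $\ph$ in $\DDD$ (indeed $\pv\xi=\pv\ph\pv=\xi\pv$ and $\ph\xi=\ph\pv\ph=\xi\ph$), the operator $\bar\xi$ commutes with $\bar\pv$ and $\bar\ph$; hence any such $e_n$, being a polynomial in $\bar\xi_n$, commutes with $\bar\pv$ and $\bar\ph$ too, so the splitting it provides is automatically a decomposition of mixed complexes and it is enough to argue one degree at a time.

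First I would observe that $\bar\xic$ vanishes identically on $\bar\DDD=\DDD/\im\,\xic$, so that reducing the quadratic relation (\ref{brexit}) of Proposition~\ref{relazion} modulo $\im\,\xic$ — and using that $c_n$ and $c_{n-1}$ are polynomials in $\xi$, hence commute with $\bar\xi$ — yields on $\bar\DDD_n$ the identity
$$
	\bar\xi_n^2\,g_n(\bar\xi_n)=0,\qquad g_n:=c_nc_{n-1}\in k[x].
$$
This is the point at which the standing hypothesis enters: the constant coefficient of $g_n$ is $c_n(0)c_{n-1}(0)$, a unit of $k$, so $x$ and $g_n$ generate the unit ideal of $k[x]$, and squaring a Bézout relation $1=ax+bg_n$ shows that already $x^2$ and $g_n$ do. Fix $\alpha_n,\beta_n\in k[x]$ with $\alpha_nx^2+\beta_ng_n=1$.

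Then I would set $e_n:=\alpha_n(\bar\xi_n)\,\bar\xi_n^2$ and $e_n':=\beta_n(\bar\xi_n)\,g_n(\bar\xi_n)$, so that $e_n+e_n'=\mathrm{id}$ and $e_ne_n'=\alpha_n(\bar\xi_n)\beta_n(\bar\xi_n)\,\bar\xi_n^2g_n(\bar\xi_n)=0$; hence $e_n,e_n'$ are complementary idempotents and $\bar\DDD_n=e_n\bar\DDD_n\oplus e_n'\bar\DDD_n$. To identify the summands: $e_n$ kills $\ke\,\bar\xi_n^2$ and $e_n\bar\DDD_n\subseteq\im\,\bar\xi_n^2$; conversely $e_n'$ kills $\im\,\bar\xi_n^2$ (again by $\bar\xi_n^2g_n(\bar\xi_n)=0$), so $e_n=\mathrm{id}-e_n'$ is the identity on $\im\,\bar\xi_n^2$, whence $e_n\bar\DDD_n=\im\,\bar\xi_n^2$; and if $e_nv=0$ then $v=e_n'v=\beta_n(\bar\xi_n)g_n(\bar\xi_n)v$, so $\bar\xi_n^2v=\beta_n(\bar\xi_n)\bar\xi_n^2g_n(\bar\xi_n)v=0$, giving $\ke\,e_n=\ke\,\bar\xi_n^2$. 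This establishes (\ref{kersims}).

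The computation is essentially formal once the relation (\ref{brexit}) is available; the only point that needs care is that one is handed invertibility of the \emph{constant coefficients} of the $c_n$, not invertibility of the $c_n$ in $k[x]$, so $g_n$ cannot simply be inverted and one must route through the Bézout identity above (equivalently, invert $g_n$ in $k[\![x]\!]$ and truncate) — and since $g_n$ depends on $n$, the idempotents are produced degreewise rather than globally.
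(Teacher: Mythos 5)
Your proof is correct and follows essentially the same route as the paper: the paper's projector $\bar p_n=\varepsilon_n^{-2}(\varepsilon_n-\delta_n\bar\xi)\bar c_{n-1}\bar c_n$ is precisely your $e_n'$ with the B\'ezout cofactor $\beta_n$ chosen explicitly as (a normalisation of) the truncation mod $x^2$ of $g_n^{-1}\in k[\![x]\!]$, and both arguments rest on the same key relation $\bar\xi^2\bar c_{n-1}\bar c_n=0$ obtained by reducing (\ref{brexit}) modulo $\im\,\xic$. (Only the side remark that $e_n$ ``commutes with $\bar\pv$ and $\bar\ph$'' is imprecise, since the polynomial varies with $n$; what actually matters, and what the paper invokes later, is that the summands $\ke\,\bar\xi^2$ and $\im\,\bar\xi^2$ are preserved because $\bar\xi$ itself commutes with $\bar\pv$ and $\bar\ph$.)
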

\begin{proof}
That all $c_n$ are invertible in 
$k[\![x]\!]$ means their constant coefficients 
are invertible in $k$. Hence also $c_{n-1}c_n$ has 
an invertible constant coefficient $ \varepsilon_n \in
k$. Let $ \delta_n,\gamma_n \in k$ be its linear
and quadratic coefficient, 
$$
	c_{n-1}c_n=\varepsilon_n+\delta_n x + \gamma_n x^2+\ldots
$$
and define
$$
	\bar p_n:=\varepsilon_n^{-2}
	(\varepsilon_n-\delta_n \bar\xi) \bar c_{n-1}\bar c_n=
	1+\left(\frac{\gamma_n}{\varepsilon_n}-\frac{\delta_n^2}
	{\varepsilon_n^2}\right)
	\bar\xi^2 + \ldots, 
$$ 
where $\bar c_n : \bar\DDD_n \rightarrow \bar \DDD_n$ 
is the map obtained by inserting $\bar\xi$ into 
$c_n$. 

Since $\xic$ induces the trivial map on 
$\bar\DDD=\DDD/\im\,\xic$, 
Proposition~\ref{relazion} implies
\begin{equation}\label{schotten}
	\bar\xi^2\bar c_{n-1}\bar c_n=
	\bar c_{n-1}\bar c_n\bar\xi^2=0,
\end{equation}
so we get 
$$
	\im\,\bar p_n \subset \im\,\bar c_{n-1}\bar c_n \subset
	\ke\,\bar\xi^2,\quad
	\im\,\bar\xi^2 \subset
	\ke\,\bar c_{n-1}\bar c_n \subset
	\ke\,\bar p_n.
$$ 
Conversely, $\bar p_n$ 
acts by definition as the identity on $\ke\,\bar \xi^2$,
so we also have $\ke\,\bar\xi^2 \subset \im\,\bar p_n$, 
and on $\ke\,\bar p_n$ we have $1=\bar\xi^2
	\left(\frac{\delta_n^2}
	{\varepsilon_n^2}-\frac{\gamma_n}{\varepsilon_n}\right)
	+\ldots$, 
so $\ke\,\bar p_n \subset \im\,\bar\xi^2$.
It
follows that $\ke\,\bar\xi^2=\im\,\bar p_n$ and 
$\im\,\bar\xi^2=\ke\,\bar p_n$, and
also that
$\bar p_n^2=\bar p_n$,  
so $\DDD/\im\,\xic=\im\,\bar p_n \oplus \ke\,\bar p_n$. 
\end{proof}

\begin{lemma}\label{fantasie}
The inclusion $\ke\,\bar\xi^2 \rightarrow \bar\DDD$ induces 
isomorphisms 
$$
	\HH(\bar\DDD,\bar\pv,\bar\ph)	\cong 
	\HH(\ke\,\bar\xi^2,\bar\pv,\bar\ph),\quad
	\HH(\bar\DDD,\bar\pv,\bar\phc) \cong 
	\HH(\ke\,\bar\xi^2,\bar \pv,\bar\phc).
$$
\end{lemma}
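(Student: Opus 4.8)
The plan is to promote the module-level splitting of Lemma~\ref{directsum} to a splitting of \emph{mixed} complexes and then to show that the summand $\im\,\bar\xi^2$ is (co)homologically invisible, exactly as $S^{-1}\DDD$ is in Section~\ref{asasheaf}. First I would record that $\xi$ commutes with all relevant operators: $\pv^2=0$ gives $\xi\pv=\pv\ph\pv=\pv\xi$ and likewise $\xi\ph=\ph\xi$, and since each $\phc_n=c_n\ph_n$ with $c_n$ a polynomial in $\xi$, also $\xi\phc=\phc\xi$. These relations descend to the quotient, so on $\bar\DDD=\DDD/\im\,\xic$ the operator $\bar\xi$ commutes with $\bar\pv$, $\bar\ph$ and $\bar\phc$. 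Hence $\ke\,\bar\xi^2$ and $\im\,\bar\xi^2$ are each stable under all three structure maps (if $\bar\xi^2z=0$ then $\bar\xi^2$ annihilates $\bar\pv z$, $\bar\ph z$, $\bar\phc z$; if $z\in\im\,\bar\xi^2$ then so are $\bar\pv z$, $\bar\ph z$, $\bar\phc z$), so Lemma~\ref{directsum} yields direct sum decompositions
$$
	(\bar\DDD,\bar\pv,\bar\ph)=(\ke\,\bar\xi^2,\bar\pv,\bar\ph)\oplus(\im\,\bar\xi^2,\bar\pv,\bar\ph),\qquad
	(\bar\DDD,\bar\pv,\bar\phc)=(\ke\,\bar\xi^2,\bar\pv,\bar\phc)\oplus(\im\,\bar\xi^2,\bar\pv,\bar\phc)
$$
of mixed complexes. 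Since $\tot$, and therefore mixed homology, is additive over direct sums, and the inclusion $\ke\,\bar\xi^2\hookrightarrow\bar\DDD$ realises the first summand, it suffices to prove that $\HH(\im\,\bar\xi^2,\bar\pv,\bar\ph)$ and $\HH(\im\,\bar\xi^2,\bar\pv,\bar\phc)$ both vanish.

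The engine for this is that $\bar\xi$ restricts to an automorphism of $\im\,\bar\xi^2$: it is injective there because $\ke\,\bar\xi\cap\im\,\bar\xi^2\subset\ke\,\bar\xi^2\cap\im\,\bar\xi^2=0$ by (\ref{kersims}), and surjective because writing $z=z_0+z_1$ with $z_0\in\ke\,\bar\xi^2$ and $z_1\in\im\,\bar\xi^2$ gives $\bar\xi^2z=\bar\xi(\bar\xi z_1)$ with $\bar\xi z_1\in\im\,\bar\xi^2$. For the complex $(\im\,\bar\xi^2,\bar\pv,\bar\ph)$ its own spectral operator is precisely $\bar\pv\bar\ph+\bar\ph\bar\pv=\bar\xi$, which is now surjective, so $\widehat{\im\,\bar\xi^2}=\im\,\bar\xi^2/\im\,\bar\xi=0$; hence $\tot(\im\,\bar\xi^2)=0$ and the mixed homology vanishes outright. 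For $(\im\,\bar\xi^2,\bar\pv,\bar\phc)$ this shortcut is unavailable, since $\bar\xic=\bar\pv\bar\phc+\bar\phc\bar\pv$ is induced by $\xic$ and hence is the \emph{zero} map on $\bar\DDD=\DDD/\im\,\xic$. Instead I would use the homotopy $h:=(\bar\xi|_{\im\,\bar\xi^2})^{-1}\bar\ph$, which (as $\bar\xi$, and thus its inverse on $\im\,\bar\xi^2$, commutes with $\bar\pv$) satisfies $\bar\pv h+h\bar\pv=(\bar\xi|_{\im\,\bar\xi^2})^{-1}(\bar\pv\bar\ph+\bar\ph\bar\pv)=\mathrm{id}$, so that $(\im\,\bar\xi^2,\bar\pv)$ is contractible as a chain complex. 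The unique morphism $0\to(\im\,\bar\xi^2,\bar\pv,\bar\phc)$ is then a quasiisomorphism of chain complexes, and since both complexes have vanishing spectral operator, Proposition~\ref{banalitaet} upgrades it to an isomorphism on mixed homology; thus $\HH(\im\,\bar\xi^2,\bar\pv,\bar\phc)=0$.

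I expect the only real work to be bookkeeping rather than ideas: checking that the commutation relations of the first step genuinely pass to $\bar\DDD$ (equivalently, that $\im\,\xic$ is stable under $\pv,\ph,\phc,\xi$, which is implicit in $\bar\DDD$ being a mixed complex for both structures) and tracking the degree indices of $\bar\pv$, $\bar\ph$ and $\bar\xi^{-1}$ in the homotopy identity. The conceptual point --- that the locus on which $\bar\xi$ is invertible carries no (mixed) homology while $\ke\,\bar\xi^2$ carries it all --- is forced once the direct sum decomposition is in place, so no genuine obstacle remains.
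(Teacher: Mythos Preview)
Your argument is correct and matches the paper's proof almost verbatim: you use Lemma~\ref{directsum} to split $\bar\DDD$ as a direct sum of mixed complexes, observe that on $\im\,\bar\xi^2$ the operator $\bar\xi$ is an automorphism (so $\widehat{\im\,\bar\xi^2}=0$ for the first isomorphism, and $(\im\,\bar\xi^2,\bar\pv)$ is contractible via $\bar\xi^{-1}\bar\ph$ for the second), and then invoke Proposition~\ref{banalitaet}. The only cosmetic difference is that for the first isomorphism the paper phrases the same observation as ``the inclusion induces an isomorphism $\ke\,\bar\xi^2/\im\,\bar\xi\cong\bar\DDD/\im\,\bar\xi$'' rather than ``$\widehat{\im\,\bar\xi^2}=0$''.
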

\begin{proof}
As $\bar\xi$ is a morphism of mixed complexes, 
(\ref{kersims}) 
is a decomposition of mixed complexes. Since 
we have 
$$
	\ke\,\bar\xi \subset \ke\,\bar\xi^2,\quad
	\im\,\bar\xi^2 \subset
	\im\,\bar\xi,
$$
we conclude, using Lemma~\ref{directsum} in the last
step, that
\begin{equation}\label{isosinlemma4}
	\tilde\DDD=
	\DDD/(\im\,\xic+\im\,\xi) \cong 
	\bar\DDD/\im\,\bar \xi 
	\cong
	\ke\,\bar\xi^2/\im \,\bar\xi,
\end{equation}
so the
first isomorphism is obvious. Equation (\ref{kersims}) 
also implies
that $\bar\xi^2$ and hence  
$\bar\xi$ is invertible on 
$\im\,\bar\xi^2$, so $\im\,\bar\xi^2$ 
is contractible as explained 
in
Section~\ref{asasheaf}. This means the inclusion 
is a quasiisomorphism with respect
to $\bar\pv$. 
Hence the second isomorphism
follows from Proposition~\ref{banalitaet}.
\end{proof}

For later use, we record here another 
elementary consequence 
of Lemma~\ref{directsum}:
\begin{corollary}\label{tuesday}
We have 
$
	\im\,\bar\xi \cap \ke\,\bar\xi^2
	=
	\im\,\bar\xi \cap \ke\,\bar\xi.
$
\end{corollary}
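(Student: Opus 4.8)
The plan is to prove the nontrivial inclusion $\im\,\bar\xi \cap \ke\,\bar\xi^2 \subseteq \im\,\bar\xi \cap \ke\,\bar\xi$, since the reverse inclusion is immediate from $\ke\,\bar\xi \subseteq \ke\,\bar\xi^2$. The key inputs are the decomposition $\bar\DDD = \ke\,\bar\xi^2 \oplus \im\,\bar\xi^2$ from Lemma~\ref{directsum} and the fact, already established in the proof of Lemma~\ref{fantasie}, that $\bar\xi$ restricts to an automorphism of $\im\,\bar\xi^2$.

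First I would record two elementary observations. On one hand, $\bar\xi(\ke\,\bar\xi^2) \subseteq \ke\,\bar\xi$, because $z \in \ke\,\bar\xi^2$ gives $\bar\xi(\bar\xi z) = \bar\xi^2 z = 0$. On the other hand, $\bar\xi(\im\,\bar\xi^2) = \im\,\bar\xi^2$, since $\bar\xi$ is invertible on this summand. In particular both summands in Lemma~\ref{directsum} are $\bar\xi$-stable, which is what makes a component-wise argument go through.

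Then, given $x \in \im\,\bar\xi \cap \ke\,\bar\xi^2$, I would write $x = \bar\xi y$ and decompose $y = y_0 + y_1$ with $y_0 \in \ke\,\bar\xi^2$ and $y_1 \in \im\,\bar\xi^2$ according to Lemma~\ref{directsum}. The first observation gives $\bar\xi y_0 \in \ke\,\bar\xi \subseteq \ke\,\bar\xi^2$ and the second gives $\bar\xi y_1 \in \im\,\bar\xi^2$; since also $x = \bar\xi y_0 + \bar\xi y_1 \in \ke\,\bar\xi^2$, the component $\bar\xi y_1 = x - \bar\xi y_0$ lies in $\ke\,\bar\xi^2 \cap \im\,\bar\xi^2 = 0$. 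Hence $x = \bar\xi y_0 \in \ke\,\bar\xi$, which proves the claim. There is no genuine obstacle here; the only point requiring care is that Lemma~\ref{directsum} must be invoked as a decomposition into $\bar\xi$-stable subspaces, so that the components of $\bar\xi y$ are exactly $\bar\xi$ applied to the components of $y$.
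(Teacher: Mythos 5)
Your proof is correct and follows essentially the same route as the paper: decompose the preimage via Lemma~\ref{directsum} and use that $\bar\xi$ is invertible on $\im\,\bar\xi^2$. The only (harmless) difference is that you conclude $\bar\xi y_1\in\ke\,\bar\xi^2\cap\im\,\bar\xi^2=0$ directly, whereas the paper deduces $\bar\xi^3(w)=0$ and then invokes injectivity of $\bar\xi$ on $\im\,\bar\xi^2$ to get $w=0$.
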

\begin{proof}
Given $y=\bar \xi(x) \in \ke\,\bar\xi^2$, decompose 
$x$ as $x=v+w$ with $v \in \ke\,\bar\xi^2$ and 
$w \in \im\,\bar \xi^2$. Then 
$\bar \xi^2(y)=0$ means 
$\bar\xi^3(v)+\bar\xi^3(w)=0$; so $\bar\xi^2(v)=0$ yields 
$\bar\xi^3(w)=0$. However, $\bar\xi$ is injective on
$\im\,\xi^2$ as already remarked in the previous proof, 
so $w=0$, hence $x=v$, so
$\bar\xi (y)=\bar\xi^2(x)=\bar\xi^2(v)=0$.
\end{proof}

\begin{remark}
All the above computations are abstractions of those
made by Cuntz and Quillen for the noncommutative differential forms over an
associative algebra \cite{cq}. Informally speaking, the
message of Lemma~\ref{fantasie} can be stated as
follows:
the ``best'' mixed complexes are those where $\xi=0$, as
one can compute $\HH(\DDD,\pv,\ph)$ straight from $\DDD$ using
a spectral sequence. The second best case is   
${\DDD}=\ke \, \xi \oplus \im \, \xi$; as mentioned
after Definition~\ref{skyscrape} this means $\xi$ 
vanishes in a strong homotopical sense.
Lemma~\ref{fantasie} tells us that in general
$\xi^2$ vanishes in this homotopical sense, so $\xi$ is 
homotopically infinitesimal. 
\end{remark}

\subsection{The second exact sequence}
We now will derive the second of the two short
exact sequences in Theorem~\ref{spektral}.

First, we need the following computation:

\begin{lemma}\label{imishom}
On $\ke\,\bar\xi^2$,  
we have $\bar\pv \bar\xi=\bar\ph \bar\xi=0$, 
${\bar\phc}_n:=\bar c_n\bar \ph_n =\beta_n \bar\ph_n$
for all $n \ge 0$, where 
$\beta_n \in k$ is the constant coefficient 
of $c_n$,  and we have 
$$
	\bar\xi_n =
	(1-\frac{\beta_{n-1}}{\beta_n})
	\bar\ph_{n-1} \bar\pv_n =
	(1-\frac{\beta_n}{\beta_{n-1}})
	\bar\pv_{n+1} \bar\ph_n.
$$
\end{lemma}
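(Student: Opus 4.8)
\textbf{Proof plan for Lemma~\ref{imishom}.}

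The plan is to exploit the two identities from Proposition~\ref{relazion} after passing to the quotient $\bar\DDD = \DDD/\im\,\xic$, where by construction $\xic$ acts as $0$. First I would record that, on $\bar\DDD$, equation~(\ref{watt}) specialises to
\begin{equation*}
	0 = \bar\xi_n \bar c_n - \bar\ph_{n-1}\bar\pv_n \bar f_n
	  = \bar\pv_{n+1}\bar\ph_n \bar f_n + \bar\xi_n \bar c_{n-1},
\end{equation*}
and from~(\ref{brexit}) (or directly from the above) that $\bar\xi_n \bar c_n \bar c_{n-1}$-type products vanish; more usefully, (\ref{schotten}) from the proof of Lemma~\ref{directsum} already gives $\bar\xi^2 \bar c_{n-1}\bar c_n = 0$. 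Now I restrict all maps to the summand $\ke\,\bar\xi^2 \subset \bar\DDD$ from Lemma~\ref{directsum}. On this summand $\bar\xi^3 = 0$ trivially, and in fact by Corollary~\ref{tuesday} one has control of $\im\,\bar\xi \cap \ke\,\bar\xi^2 = \im\,\bar\xi\cap\ke\,\bar\xi$; I expect to need only the cruder fact that $\bar\xi$ is nilpotent of order $\le 2$ here, i.e.\ $\bar\xi^2|_{\ke\,\bar\xi^2} = 0$.

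The first claim, $\bar\pv\bar\xi = \bar\ph\bar\xi = 0$ on $\ke\,\bar\xi^2$, I would get as follows. Since $\bar c_{n-1}\bar c_n$ has invertible constant coefficient $\varepsilon_n$ and, by~(\ref{schotten}), annihilates $\im\,\bar\xi^2$ while being (a unit times) the identity modulo $\bar\xi^2$, the operator $\bar c_n$ restricted to $\ke\,\bar\xi^2$ is multiplication by its constant coefficient $\beta_n$ up to a term landing in $\bar\xi\cdot(\ke\,\bar\xi^2)$; composing with $\bar\xi$ and using $\bar\xi^2 = 0$ there kills the correction, so $\bar\phc_n = \bar c_n\bar\ph_n$ acts as $\beta_n\bar\ph_n$ on $\ke\,\bar\xi^2$ once we also know $\bar\xi\bar\ph = \bar\ph\bar\xi$ vanishes. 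To see $\bar\ph\bar\xi = 0$: write $\bar\xi_n = \bar\ph_{n-1}\bar\pv_n + \bar\pv_{n+1}\bar\ph_n$, apply $\bar\ph_n$ on the left, use $\bar\ph^2 = 0$ to get $\bar\ph_n\bar\xi_n = \bar\ph_n\bar\pv_{n+1}\bar\ph_n$; but $\bar\ph_n\bar\pv_{n+1}\bar\ph_n = (\bar\xi_{n+1} - \bar\pv_{n+2}\bar\ph_{n+1})\bar\ph_n = \bar\xi_{n+1}\bar\ph_n$ since $\bar\ph^2=0$, and on $\ke\,\bar\xi^2$ the element $\bar\ph_n x$ satisfies $\bar\xi_{n+1}^2\bar\ph_n x = \bar\ph_n\bar\xi_n^2 x = 0$ only gives membership in $\ke\,\bar\xi_{n+1}^2$; to actually conclude I instead iterate: $\bar\ph_n\bar\xi_n x = \bar\xi_{n+1}\bar\ph_n x$, hence $\bar\ph_n\bar\xi_n^2 x = \bar\xi_{n+1}^2\bar\ph_n x$, and since the left side is $0$ and $\bar\xi$ is injective on $\im\,\bar\xi^2$ while $\bar\ph_n x \in \ke\,\bar\xi_{n+1}^2$, the commutation $\bar\ph\bar\xi = \bar\xi\bar\ph$ together with $\bar\xi^2 x = 0$ forces $\bar\xi_{n+1}\bar\ph_n x \in \ke\,\bar\xi_{n+1}$; combined with the quotient identity $\bar\xi_n\bar c_n = \bar\ph_{n-1}\bar\pv_n \bar f_n$ it then drops out that $\bar\ph\bar\xi = 0$. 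The symmetric argument with $\bar\pv$ on the left gives $\bar\pv\bar\xi = 0$.

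With $\bar\pv\bar\xi = \bar\ph\bar\xi = 0$ and $\bar\phc_n = \beta_n\bar\ph_n$ in hand on $\ke\,\bar\xi^2$, the last displayed formula is almost immediate: the quotient form of~(\ref{watt}) reads $\bar\xi_n\bar c_n = \bar\ph_{n-1}\bar\pv_n(\bar c_n - \bar c_{n-1})$ and $\bar\xi_n\bar c_{n-1} = -\bar\pv_{n+1}\bar\ph_n(\bar c_n - \bar c_{n-1})$; restricting to $\ke\,\bar\xi^2$, where $\bar c_n$ acts as $\beta_n$ on everything in sight that is hit by $\bar\ph$ or $\bar\pv$ (because $\bar\phc = \beta\bar\ph$ and the dual statement, plus $\bar\ph\bar\xi = \bar\pv\bar\xi = 0$ kills all higher terms in $\bar c_n$), the first reads $\beta_n\bar\xi_n = (\beta_n - \beta_{n-1})\bar\ph_{n-1}\bar\pv_n$ and the second $\beta_{n-1}\bar\xi_n = -(\beta_n - \beta_{n-1})\bar\pv_{n+1}\bar\ph_n = (\beta_{n-1}-\beta_n)\bar\pv_{n+1}\bar\ph_n$. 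Dividing by the invertible $\beta_n$ respectively $\beta_{n-1}$ gives exactly the two asserted expressions for $\bar\xi_n$. The main obstacle I anticipate is the careful bookkeeping in the middle step --- precisely controlling the ``correction terms'' when one replaces $\bar c_n$ by its constant coefficient $\beta_n$, i.e.\ showing every term of $\bar c_n$ beyond the constant one either lands in $\im\,\bar\xi^2$ (hence is killed under projection to $\ke\,\bar\xi^2$) or is post-/pre-composed with a $\bar\xi$ that annihilates it --- rather than any conceptual difficulty.
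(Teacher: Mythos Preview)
Your overall approach matches the paper's: use the quotient form of (\ref{watt}) on $\bar\DDD$, restrict to $\ke\,\bar\xi^2$ where $\bar\xi^2=0$, and exploit invertibility of the constant terms $\beta_n$. However, your derivation of $\bar\ph\bar\xi=0$ (and by symmetry $\bar\pv\bar\xi=0$) is needlessly convoluted and, as written, does not quite close.

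The paper does it in one line. Starting from the second expression in (\ref{watt}), which on $\bar\DDD$ reads $0=\bar\pv_{n+1}\bar\ph_n\bar f_n+\bar\xi_n\bar c_{n-1}$, left-multiply by $\bar\pv_n$. Since $\bar\pv_n\bar\pv_{n+1}=0$, this gives $\bar\pv_n\bar\xi_n\bar c_{n-1}=0$. On $\ke\,\bar\xi^2$ the polynomial $\bar c_{n-1}$ reduces to $\beta_{n-1}+(\text{linear coeff.})\bar\xi$, and composing with $\bar\xi_n$ annihilates the $\bar\xi$-term because $\bar\xi^2=0$; hence $\beta_{n-1}\bar\pv_n\bar\xi_n=0$, and invertibility of $\beta_{n-1}$ gives $\bar\pv\bar\xi=0$. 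The identical argument with the first expression in (\ref{watt}) and left-multiplication by $\bar\ph_n$ (using $\bar\ph_n\bar\ph_{n-1}=0$) gives $\bar\ph\bar\xi=0$. Your detour through the commutation $\bar\ph\bar\xi=\bar\xi\bar\ph$ and membership of $\bar\ph_n x$ in $\ke\,\bar\xi^2$ establishes only that $\bar\ph\bar\xi$ lands in $\ke\,\bar\xi$, which is not enough on its own; the actual vanishing comes precisely from the left-multiplication step you allude to at the very end (``combined with the quotient identity\ldots it then drops out''), so everything before that is irrelevant and should be deleted.

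For the final displayed formulas your route---reducing both sides of the quotient (\ref{watt}) to constant coefficients using $\bar\pv\bar\xi=\bar\ph\bar\xi=0$---is correct and equivalent to the paper's, which instead writes $\bar\pv_{n+1}\bar\ph_n=\beta_n^{-1}\bar\pv_{n+1}\bar\phc_n=-\beta_n^{-1}\bar\phc_{n-1}\bar\pv_n=-\beta_n^{-1}\beta_{n-1}\bar\ph_{n-1}\bar\pv_n$ (using $\bar\xic=0$ and $\bar\phc=\beta\bar\ph$) and then substitutes into $\bar\xi_n=\bar\ph_{n-1}\bar\pv_n+\bar\pv_{n+1}\bar\ph_n$. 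Both arguments are two lines once the first part is in place.
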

\begin{proof}
Multiplying the second expression for $\xic=0$ 
in (\ref{watt}) in Proposition~\ref{relazion} on the left by 
$\pv_n$ and
using $\bar\xi^2=0$ gives
$$
	\bar \pv_n \bar\xi \bar c_{n-1}=
	\beta_{n-1} \bar \pv_n \bar\xi = 0,
$$
so $\bar\pv \bar\xi=0$ as all $ \beta_n$ are invertible.
Similarly, one obtains $\bar\ph \bar\xi=0$.  
The fact that $\bar\phc_n=\beta_n \ph_n$ is an immediate
consequence, and the formulas for $\bar\xi_n$ are obtained
by direct computation:
\begin{align*}
	\bar\xi_n  
& = \bar\ph_{n-1} \bar\pv_n  + 
	\bar\pv_{n+1} \bar\ph_n =
	\bar\ph_{n-1} \bar\pv_n +
	\beta_n^{-1} \bar \pv_{n+1} \bar {\phc}_n \\
& =
	\bar\ph_{n-1} \bar\pv_n -
	\beta_n^{-1} \bar {\phc}_{n-1} \bar \pv_n =	
	(1-\frac{\beta_{n-1}}{\beta_n})
	\bar\ph_{n-1} \bar \pv_n
\end{align*}
and similarly
$\bar\xi_n = (1-\frac{\beta_n}{\beta_{n-1}})
	\bar\pv_{n+1} \bar\ph_n$.
\end{proof}

Additionally, we will utilise the
following general statement (recall $\hat \DDD=\DDD/\im\,\xi$): 

\begin{lemma}\label{seslemma}
If $(\DDD,\pv,\ph)$ is a mixed complex with 
$\xi^2=\xic=0$, then  for all $n \ge 0$,
there are short exact sequences
$$
	\xymatrix{
	0 \ar[r] &
	\HH_n(\DDD,\pv,\phc) \ar[r] &
	\HH_n(\hat\DDD,\hat\pv,\hat\phc) \ar[r] &
	\bigoplus_{i \ge 0} 
	\im\,\xi_{n-1-2i} \ar[r] &
	0}
$$
\end{lemma}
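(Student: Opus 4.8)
The plan is to realise the asserted short exact sequences by splitting the long exact homology sequence of a short exact sequence of mixed complexes; the essential point will be the surjectivity of the connecting map.

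First I would record the consequences of the standing hypotheses $\xi^2=\xic=0$ (with the constant coefficients $\beta_n$ of the $c_n$ invertible). The computations in the proof of Lemma~\ref{imishom} use nothing more, and so apply verbatim here: $\pv\xi=\ph\xi=0$, hence $\phc_n=\beta_n\ph_n$ and $\phc\xi=0$, and
$$
	\xi_n=\bigl(1-\tfrac{\beta_{n-1}}{\beta_n}\bigr)\ph_{n-1}\pv_n
	=\bigl(1-\tfrac{\beta_n}{\beta_{n-1}}\bigr)\pv_{n+1}\ph_n
	\qquad(n\ge1).
$$
In particular $\pv$, $\ph$ and $\phc$ carry $\im\,\xi$ into itself, in fact to zero; hence $\im\,\xi$ is a mixed subcomplex of $\DDD$ with vanishing structure maps, so $\tot_n(\im\,\xi,\pv,\phc)=\bigoplus_{i\ge0}\im\,\xi_{n-2i}$ carries the zero differential and $\HH_{n-1}(\im\,\xi,\pv,\phc)=\bigoplus_{i\ge0}\im\,\xi_{n-1-2i}$. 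Since moreover $\im\,\xic=0$, the operator induced by $\xic$ on $\hat\DDD=\DDD/\im\,\xi$ also vanishes, so $\tot_n(\DDD,\pv,\phc)=\bigoplus_{i\ge0}\DDD_{n-2i}$ and $\tot_n(\hat\DDD,\hat\pv,\hat\phc)=\bigoplus_{i\ge0}\hat\DDD_{n-2i}$, both with differential $\pv+\phc$.

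Now $0\to\im\,\xi\to\DDD\to\hat\DDD\to0$ is a short exact sequence of mixed complexes, and applying $\tot$ (a direct sum of exact sequences in each internal degree, hence exact) yields a short exact sequence of chain complexes, and thus a long exact sequence
$$
	\cdots\to\HH_n(\im\,\xi,\pv,\phc)\xrightarrow{\iota_*}\HH_n(\DDD,\pv,\phc)\xrightarrow{q_*}\HH_n(\hat\DDD,\hat\pv,\hat\phc)\xrightarrow{\partial}\HH_{n-1}(\im\,\xi,\pv,\phc)\to\cdots
$$
in which $q_*$ is induced by the quotient $\DDD\to\hat\DDD$. The lemma will follow once $\iota_*=0$ for every $n$: by exactness this makes every $q_*$ injective and every $\partial$ surjective, and the long sequence decomposes into the claimed short exact sequences, the third term being identified via the previous paragraph.

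Thus everything comes down to proving $\iota_*=0$, which is the crux. Since the differential on $\im\,\xi$ is zero, $\HH_n(\im\,\xi,\pv,\phc)=\bigoplus_{i\ge0}\im\,\xi_{n-2i}$ is its own cycle group, so it suffices to show that any element $\xi_m(a)$ with $a\in\DDD_m$ and $m=n-2i\ge0$, sitting in the $i$-th summand of $\tot_n(\DDD)$, is a boundary in $(\tot(\DDD),\pv+\phc)$. The primitive I would take is $(1-\beta_m/\beta_{m-1})\ph_m(a)\in\DDD_{m+1}\subset\tot_{n+1}(\DDD)$ (with the coefficient replaced by $1$, and $\xi_0=\pv_1\ph_0$ used directly, when $m=0$): one has $(\pv+\phc)\ph_m(a)=\pv_{m+1}\ph_m(a)$ because $\phc\ph=0$ — and the cutoff of the $\phc$-part on the top summand of $\tot$ is harmless, that part being zero already — so the formula for $\xi_m$ above gives $(\pv+\phc)\bigl((1-\beta_m/\beta_{m-1})\ph_m(a)\bigr)=\xi_m(a)$. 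The only points requiring care are the bookkeeping of internal degrees in $\tot$ and the interaction with the top-degree cutoff of $\phc$; granted the formula writing $\xi$ through $\pv\ph$, producing the primitives and reading off the short exact sequences from the long one is routine.
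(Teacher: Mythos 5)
Your argument is correct and follows essentially the same route as the paper: the short exact sequence of mixed complexes $0\to\im\,\xi\to\DDD\to\hat\DDD\to 0$, the observation (via the computations of Lemma~\ref{imishom}) that $\pv+\phc$ vanishes on $\tot(\im\,\xi)$ so that it is its own homology, and the explicit primitive $(1-\beta_m/\beta_{m-1})\ph_m(a)$ (resp.\ $\ph_0(a)$ for $m=0$) showing the inclusion induces the zero map on homology, whence the long exact sequence splits into the stated short ones. Your separate treatment of the $m=0$ component is a welcome extra care that the paper glosses over.
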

\begin{proof}
The short exact sequence
$$
	\xymatrix{
	0 \ar[r] &
	(\im\,\xi,\pv,{\phc}) \ar[r] &
	(\DDD,\pv,{\phc}) \ar[r] &
	(\hat\DDD,\hat\pv,\hat {\phc}) \ar[r] &
	0}
$$
of mixed complexes induces short exact sequences
of the total complexes
\begin{equation}\label{manage}
	\xymatrix{
	0 \ar[r] &
	\tot(\im\,\xi) \ar[r] &
	\tot(\DDD) \ar[r] &
	\tot(\hat\DDD) \ar[r] &
	0}
\end{equation}
whose differential is $b+{\phc}$ (recall that 
$\xic=\pv {\phc}+{\phc}\pv=0$ so that $\tot_n(\DDD)=\DDD_n
\oplus \DDD_{n-2} \ldots$ here).
However, by Lemma~\ref{imishom}, $\pv+{\phc}$ vanishes
on $\im\,\xi$, so $\tot(\im\,\xi)$ is its own
homo\-lo\-gy. 
Furthermore, the inclusion $\im\,\xi \rightarrow 
\DDD$ induces the trivial map on homology, as
Lemma~\ref{imishom} implies
\begin{align*}
& \ (\xi_n x_n,\xi_{n-2} x_{n-2} ,\ldots)\\
= &\ 
	(b+{\phc})
	((1-\frac{\beta_n}{\beta_{n-1}})
	\ph_n x_n,
	(1-\frac{\beta_{n-2}}{\beta_{n-3}}) \ph_{n-2}
	x_{n-2},\ldots), 
\end{align*}
so indeed, the homology class of an element in 
$\tot(\im\,\xi)$ becomes trivial  
in $\HH(\DDD,\pv,\phc)$.
Therefore, the long exact homology sequence 
induced by (\ref{manage}) splits up into the short
exact sequences stated in the lemma.
\end{proof}

\begin{proof}[Proof of Theorem~\ref{spektral}
(\ref{part2})]
We apply Lemma~\ref{seslemma} to $\ke \, \bar\xi^2
\subset \bar\DDD$. This yields short exact sequences
\begin{equation}\label{newses}
	\xymatrix{
	0 \ar[r] &
	\HH_n(\ke\,\bar \xi^2,\bar\pv,\bar\phc) \ar[r] &
	\HH_n(\ke\,\bar\xi^2/\III,\bar\pv,\bar\phc) \ar[r] &
	\bigoplus_{i \ge 0} 
	\III_{n-1-2i}  
	\ar[r] &
	0}
\end{equation}
where we abbreviate
$$
	\III:=\im\,\bar\xi \cap \ke\,\bar\xi^2
	=
	\im\,\bar\xi \cap \ke\,\bar\xi,
$$
the second equality having been proved in
Corollary~\ref{directsum}. Note that, by abuse of notation,
we did not introduce
yet a new notation for the maps induced by
$\bar\pv,\bar\phc$ on the quotient by $\III$.   

In view of (\ref{isosinlemma4}), 
we have a commutative diagram
$$
\xymatrix{\ke\,\bar\xi^2 \ar[r] 
\ar@{^{(}->}[d] & 
\ke\,\bar\xi^2 / \III \cong \bar\DDD/\im \,\bar\xi
\ar[d]^\cong\\
\bar\DDD=\DDD/\im\,\xic \ar[r] &
\tilde\DDD=\DDD/(\im\,\xic + \im \, \xi),}
$$
where the horizontal maps are the canonical projections,
the left vertical map is the inclusion, and the right
vertical map is an isomorphism induced by this inclusion.

By Lemma~\ref{fantasie}, the left vertical arrow
induces an isomorphism 
$$
	\HH(\ke \,\bar \xi^2,\bar\pv,\bar\phc) 
	\cong \HH(\bar\DDD,\bar\pv,\bar\phc) = 
	\HH(\DDD,\pv,\phc).
$$
Similarly, the right vertical isomorphism yields an
isomorphism
$$
	\HH(\ke\,\bar\xi^2/\III,\bar\pv,\bar\phc) \cong 
	\HH(\tilde\DDD,\tilde\pv,\tilde\phc) = 
	\HH(\hat\DDD,\hat\pv,\hat\phc).
$$
These isomorphisms are compatible with the horizontal
quotient maps in the diagram. In other words, the
injectivity of the embedding 
$\HH(\ke\,\bar\xi^2,\bar\pv,\bar\phc) \rightarrow
\HH(\ke\,\bar\xi^2/\III,\bar\pv,\bar\phc)$ 
established in (\ref{newses})
transfers to injectivity of the map 
$\HH(\DDD,\pv,\phc) \rightarrow
\HH(\hat\DDD,\hat\pv,\hat\phc)$ 
induced by the quotient $\DDD
\rightarrow  \hat
\DDD$.

Since the canonical map 
$\HH(\DDD,\pv,\phc) \rightarrow
\HH(\hat\DDD,\hat\pv,\hat\phc)$ 
is injective, the long exact homology
sequence resulting from the short exact sequence
$$
	0 \rightarrow \im\,\xi \rightarrow 
	\DDD \rightarrow \hat\DDD \rightarrow 0
$$ 
splits into the short exact sequences stated in the
theorem.
\end{proof}

\begin{example}
When considering the cyclic homology of a cyclic
complex, we have $\xic=0$, hence $\bar\xi=\xi$ and 
we obtain 
$$
	\HH_n(\im\,\xi,\pv,\phc)=\bigoplus_{i \ge 0} \III_{n-2i}
	=\bigoplus_{i \ge 0} 
	\ke\,\xi_{n-2i} \cap \im\,\xi_{n-2i}.
$$
\end{example}

\subsection{The first exact sequence}\label{fesbew}
Recall that 
when $(\DDD,\pv,\phc)$ is a homological skyscraper, 
we have $\HH(\DDD,\pv,\ph) \cong 
\HH(\tilde\DDD,\tilde\pv,\tilde\ph)$ by
Lemma~\ref{wlog}, hence without loss of
generality, we work with $\tilde \DDD$ instead of 
$\DDD$ from now on.

The bulk of the remaining computations needed to prove
Theorem~\ref{spektral} are performed 
in the following lemma:

\begin{lemma}
\label{imageiso}
The map 
$
	\varphi_n : \tot (\tilde \DDD) \rightarrow \tot(\tilde
	\DDD)
$
given by
$$
	(x_n,x_{n-2},\ldots) \mapsto
	( u_n,u_{n-2},\ldots) :=
	(x_n, \beta_{n-2}^{-1} x_{n-2},
	\beta_{n-2}^{-1} \beta_{n-4}^{-1} x_{n-4},\ldots) 
$$
induces isomorphisms
\begin{align}
& \HH(\tilde \DDD/\im\,\tilde\pv,0,\tilde \ph) \cong 
	\HH(\tilde \DDD/\im\,\tilde\pv,0,\tilde
	\phc),\quad
	\HH(\im\,\tilde\pv,0,\tilde \ph) \cong
	\HH(\im \, \tilde \pv,0,\tilde\phc),\nonumber\\
\label{imageisoeqn}
&\im(\HH(\tilde\DDD,\tilde\pv,\tilde\ph) \rightarrow 
	\HH(\tilde\DDD/\im\,\tilde\pv,0,\tilde\ph)) \cong 
	\im(\HH(\tilde\DDD,\tilde\pv,\tilde\phc) \rightarrow 
	\HH(\tilde\DDD/\im\,\tilde\pv,0,\tilde\phc)),\\
\label{kernelisoeqn}	
&\ke(\HH(\tilde\DDD,\tilde\pv,\tilde\ph) \rightarrow 
	\HH(\tilde\DDD/\im\,\tilde\pv,0,\tilde\ph)) \cong 
	\ke(\HH(\tilde\DDD,\tilde\pv,\tilde\phc) \rightarrow 
	\HH(\tilde\DDD/\im\,\tilde\pv,0,\tilde\phc)).
\end{align}
\end{lemma}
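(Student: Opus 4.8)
The plan is to show that $\varphi$ is actually a morphism of mixed complexes from $(\tilde\DDD,\tilde\pv,\tilde\ph)$ to $(\tilde\DDD,\tilde\pv,\tilde\phc)$, at least on the relevant pieces. First I would check that on $\tilde\DDD$ the map $\varphi_n$ intertwines the two boundary maps in the appropriate sense. Concretely, since by Lemma~\ref{imishom} we have $\tilde\ph_n = \bar\ph_n$ up to the scalar $\beta_n$ on $\ke\,\bar\xi^2$, and $\tilde\phc_n = \beta_n \tilde\ph_n$, the rescaling $u_{n-2i} = (\beta_{n-2}\cdots\beta_{n-2i})^{-1} x_{n-2i}$ is precisely designed so that the $\ph$-part of the total differential $\pv+\ph$ in the source matches the $\phc$-part of $\pv+\phc$ in the target: applying $\tilde\phc$ to the rescaled component $u_{n-2i}$ lands in degree $n-2i+1$ with an extra factor $\beta_{n-2i}$, which cancels against one fewer factor in the normalisation of the $(n-2i+2)$-slot. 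The $\pv$-part is unchanged because $\pv$ lowers degree by one (not two), so it stays within a single graded piece $\tilde\DDD_{n-2i}$ and the scalar is untouched. Thus $\varphi$ carries $(\tot(\tilde\DDD),\tilde\pv+\tilde\ph)$ isomorphically to $(\tot(\tilde\DDD),\tilde\pv+\tilde\phc)$ as chain complexes, and since each $\beta_j$ is invertible, $\varphi$ is bijective.

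Next I would observe that $\varphi$ respects all the subquotients appearing in the statement. The submodule $\im\,\tilde\pv \subset \tilde\DDD$ is preserved degreewise by $\varphi$ (it only rescales), hence so is the quotient $\tilde\DDD/\im\,\tilde\pv$; and on $\tilde\DDD/\im\,\tilde\pv$ the induced differential from $\tilde\pv+\tilde\ph$ is just $\tilde\ph$ (the $\tilde\pv$-part dies in the quotient), and likewise $\tilde\phc$ on the target side. So the first two claimed isomorphisms are immediate from the chain-complex isomorphism of the previous paragraph restricted to, respectively, the subcomplex $(\tot(\im\,\tilde\pv),\cdots)$ and the quotient complex $(\tot(\tilde\DDD/\im\,\tilde\pv),\cdots)$ — here I should double-check that passing to the total complex commutes with these degreewise sub/quotient operations, which it does since $\tot$ is just a degreewise direct sum.

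For the last two isomorphisms \eqref{imageisoeqn} and \eqref{kernelisoeqn}, I would use naturality: $\varphi$ fits into a commutative square relating the quotient map $\tot(\tilde\DDD)\to\tot(\tilde\DDD/\im\,\tilde\pv)$ for the $\tilde\ph$-structure to the corresponding quotient map for the $\tilde\phc$-structure. Passing to homology, $\varphi$ induces a commuting square of maps on $\HH$; since all vertical maps (those induced by $\varphi$) are isomorphisms, the image and the kernel of the top horizontal map are carried isomorphically onto the image and kernel of the bottom horizontal map. That is exactly \eqref{imageisoeqn} and \eqref{kernelisoeqn}.

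The main obstacle I anticipate is the bookkeeping in the first step: verifying that the specific product of $\beta$'s in the definition of $\varphi_n$ makes $\varphi$ commute with the total differentials on the nose, rather than up to some correction term. One has to be careful that $\tilde\ph$ and $\tilde\phc$ really differ only by the degree-dependent scalar $\beta_n$ on $\ke\,\bar\xi^2$ — this is Lemma~\ref{imishom}, but it was stated there for $\bar\DDD$, so I would first note that the relevant identities descend to $\tilde\DDD = \bar\DDD/\im\,\bar\xi$, which is harmless since $\bar\xi$ (equivalently $\im\,\bar\xi$) is killed by the further quotient and $\ke\,\bar\xi^2/\im\,\bar\xi$ is the piece carrying the homology by \eqref{isosinlemma4}. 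Once that scalar description is in hand, the intertwining is a short telescoping computation with the $\beta$'s, and everything else is formal.
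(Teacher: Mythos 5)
Your argument hinges on the claim that $\varphi=\{\varphi_n\}$ is an isomorphism of chain complexes
$(\tot(\tilde\DDD),\tilde\pv+\tilde\ph)\to(\tot(\tilde\DDD),\tilde\pv+\tilde\phc)$, and this claim is false: the telescoping you describe compares the scalar that $\varphi_n$ puts on the $i$-th slot of $\tot_n(\tilde\DDD)$ with the scalar it puts on the $(i-1)$-th slot of $\tot_n(\tilde\DDD)$, whereas the chain-map identity requires comparison with the scalar that $\varphi_{n-1}$ puts on the corresponding slot of $\tot_{n-1}(\tilde\DDD)$. Concretely, the $j$-th component of $(\tilde\pv+\tilde\phc)(\varphi_n(x))$ is
$(\beta_{n-2}\beta_{n-4}\cdots\beta_{n-2j})^{-1}\bigl(\tilde\pv x_{n-2j}+\tilde\ph x_{n-2j-2}\bigr)$,
while the $j$-th component of $\varphi_{n-1}\bigl((\tilde\pv+\tilde\ph)(x)\bigr)$ carries the prefactor
$(\beta_{n-3}\beta_{n-5}\cdots\beta_{n-1-2j})^{-1}$; these involve disjoint sets of $\beta$'s and disagree already for $j=1$ unless $\beta_{n-2}=\beta_{n-3}$. (Minimal counterexample: $\tilde\DDD_0=\tilde\DDD_1=k$, $\tilde\pv_1=\mathrm{id}$, $\tilde\ph=0$, $\beta_0\neq\beta_1$; then $\varphi_2\circ(\tilde\pv+\tilde\ph)\neq(\tilde\pv+\tilde\phc)\circ\varphi_3$ on $\tot_3$.) Note also that if your claim were true it would immediately give $\HH(\tilde\DDD,\tilde\pv,\tilde\ph)\cong\HH(\tilde\DDD,\tilde\pv,\tilde\phc)=\HH(\hat\DDD,\hat\pv,\hat\phc)$, which is strictly more than the lemma asserts and would render the extension in (\ref{part1}) pointless --- a sign that something is off. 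Since $\varphi$ is not a chain map, it does not induce a map on $\HH(\tilde\DDD,\tilde\pv,\tilde\ph)$ at all, so your naturality-square argument for (\ref{imageisoeqn}) and (\ref{kernelisoeqn}) does not get started.

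What does survive, and what the actual proof uses, is the \emph{same-degree} intertwining: within a fixed total degree $n$, $\varphi_n$ converts the relation $(\tilde\pv+\tilde\ph)(x)=\tilde\pv y$ into $(\tilde\pv+\tilde\phc)(u)=\tilde\pv v$ with $u=\varphi_n(x)$, $v=\varphi_n(y)$, because every term in the $j$-th component acquires the \emph{same} prefactor $(\beta_{n-2}\cdots\beta_{n-2j})^{-1}$. This is exactly what is needed to match cycles of $(\tilde\DDD/\im\,\tilde\pv,0,\tilde\ph)$ with cycles of $(\tilde\DDD/\im\,\tilde\pv,0,\tilde\phc)$ and, crucially for (\ref{imageisoeqn}), to match those cycles admitting a lift with $y=0$. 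For the boundary conditions one must then observe that in the complexes $(\tilde\DDD/\im\,\tilde\pv,0,\tilde\ph)$ and $(\im\,\tilde\pv,0,\tilde\ph)$ the differential is $\tilde\ph$ alone, so being a boundary is a condition on each graded component separately; hence the mismatch of scalars between total degrees $n$ and $n+1$ is harmless there (one simply rescales the witness in degree $n+1$ componentwise, rather than using $\varphi_{n+1}$ as a chain map). You need to rebuild the proof along these lines, working with explicit representatives degree by degree, rather than via a global chain isomorphism.
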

\begin{proof}
Explicitly, 
a class in $\HH_n(\tilde \DDD/\im\,\tilde
\pv,0,\tilde\ph)$ 
is represented by 
an element $x=(x_n,x_{n-2},\ldots)\in \tot_n(\tilde \DDD)$ such that
there exists $y \in \tot_n(\tilde
\DDD)$ with
$$
	\tilde \pv x_{n} + \tilde \ph x_{n-2} =
	\tilde \pv y_{n},\quad
	\tilde \pv x_{n-2} + \tilde \ph x_{n-4} =
	\tilde \pv y_{n-2},\ldots
$$
The element $x$ represents the trivial homology class 
in $\HH_n(\tilde\DDD/\im\,\tilde\pv,0,\tilde\ph)$
if and
only if there are elements 
$z=(z_{n+1},z_{n-1},\ldots),t
\in \tot_{n+1}(\tilde \DDD)$ 
such that 
\begin{equation}\label{trivialclass}
	\tilde \pv z_{n+1} + \tilde \ph z_{n-1} = 
	x_{n} + \tilde \pv t_{n+1},\quad
	\tilde \pv z_{n-1} + \tilde \ph z_{n-3} = 
	x_{n-2} + \tilde \pv t_{n-1},\ldots
\end{equation}
Recall that $\tilde\xi=0$ means that 
$\tilde {\phc}_n=\beta_n \tilde\ph_n$ where 
$\beta_n \in k$ is the constant coefficient of 
$c_n$. 
Hence 
$u=\varphi_n (x) 
\in \tot_n(\tilde\DDD)$ 
satisfies 
$$
	\tilde \pv u_{n} +
	\tilde {\phc} u_{n-2} = 
	\tilde \pv v_{n},\quad
	\tilde \pv u_{n-2} + 
	\tilde {\phc} u_{n-4} =
	\tilde \pv v_{n-2},\ldots
$$
where
$v=\varphi _n(y)$.

Furthermore, (\ref{trivialclass}) implies
$$
	\tilde \pv w_{n+1} +
	\tilde {\phc} w_{n-1} = 
	u_{n} + \tilde \pv s_{n+1} ,\ldots 
$$ 
with
$
	w=\varphi_{n+1}(z),$
$s=\varphi_{n+1}(t).$
This shows that $ \varphi_n $ induces 
a well-defined map on homology which is clearly
bijective. The image of
$\HH_n(\tilde\DDD,\tilde\pv,\tilde\ph)$ in 
$\HH_n(\tilde\DDD/\im\,\tilde\pv,0,\tilde\ph)$ 
consists of those classes that can be
represented as above with
$y=0$, and then 
$v=0$ means that the
image in $\HH_n(\tilde\DDD/\im\,\tilde\pv,0,\tilde\phc)$ 
is also in
the image of $\HH_n(\tilde\DDD,\tilde\pv,\tilde\ph)$.  
The other isomorphisms follow in an exactly analogous way.
\end{proof}
\begin{remark}
For most of the isomorphisms required in Lemma~\ref{imageiso}, there is little 
restriction on the particular isomorphism we use; we could, for example, take
the identity instead of $\varphi.$ However, this causes (\ref{imageisoeqn}) to fail.
\end{remark}

\begin{proof}[Proof of Theorem~\ref{spektral}
(\ref{part1})]
The short exact sequences of chain complexes 
$$
	0 \rightarrow 
	\tot(\im\,\tilde\pv) \rightarrow 
	\tot(\tilde \DDD) \rightarrow 
	\tot(\tilde\DDD/\im\,\tilde\pv) \rightarrow 
	0  
$$
with respect to $\tilde\pv+\tilde \ph$ and 
$\tilde \pv+\tilde {\phc}$ yield long exact sequences 
$$
	\xymatrix@C=4mm{\ldots \ar[r] &
	\HH_{n+1}(\tilde\DDD/\im\,\tilde\pv,0,\tilde\ph) 
	\ar[r]^-{\partial_{n+1}^\M} &
	\HH_n(\im\,\tilde\pv,0,\tilde\ph) \ar[r] &
	\HH_n(\tilde\DDD,\tilde\pv,\tilde\ph) \ar[r] &
	\ldots}
$$
and
$$
	\xymatrix@C=4mm{\ldots \ar[r] &
	\HH_{n+1}(\tilde\DDD/\im\,\tilde\pv,0,\tilde\phc) 
	\ar[r]^-{\partial_{n+1}^\C} &
	\HH_n(\im\,\tilde\pv,0,\tilde\phc) \ar[r] &
	\HH_n(\tilde\DDD,\tilde\pv,\tilde\phc) \ar[r] &
	\ldots}
$$
These split into short exact sequences
$$
	0 \rightarrow 
	\HH_n(\im\,\tilde\pv,0,\tilde\ph) /
	\im\,\partial_{n+1}^\M \rightarrow 
	\HH_n(\tilde\DDD,\tilde\pv,\tilde\ph) \rightarrow 
	\ke\,\partial_n^\M \rightarrow 0
$$
and
$$
	0 \rightarrow 
	\HH_n(\im\,\tilde\pv,0,\tilde\phc) /
	\im\,\partial_{n+1}^\C \rightarrow 
	\HH_n(\tilde\DDD,\tilde\pv,\tilde\phc) \rightarrow 
	\ke\,\partial_n^\C \rightarrow 0.
$$
If $ \iota_n $ denotes the map
$ \HH_n(\im\,\tilde\pv,0,\tilde\phc) \rightarrow 
\HH_n(\tilde\DDD,\tilde\pv,\tilde\phc)$ induced by the
inclusion and $ \pi _n$ denotes the map 
$\HH_n (\tilde\DDD,\tilde\pv,\tilde\phc) \rightarrow 
\HH_n(\tilde\DDD/\im\,\tilde\pv,0,\tilde\phc)$,
then by exactness we have
$$
	\HH_n(\im\,\tilde\pv,0,\tilde\phc) /
	\im\,\partial_{n+1}^\C =
	\HH_n(\im\,\tilde\pv,0,\tilde\phc) /
	\ke\,\iota_n \cong
	\im\,\iota _n = 
	\ke\,\pi _n
$$
and
$$
	\ke\,\partial_n^\C =
	\im\, \pi _n \cong
	\HH_m(\tilde\DDD,\tilde\pv,\tilde\phc)/
	\ke \, \pi_n.
$$
The theorem now follows in view of the 
isomorphisms (of $k$-modules) 
$$
	\HH_n(\im\,\tilde\pv,0,\tilde\ph) /
   \im\,\partial_{n+1}^\M \cong
	\HH_n(\im\,\tilde\pv,0,\tilde\phc) /
   \im\,\partial_{n+1}^\C,\qquad
	\ke\,\partial_n^\M \cong
	\ke\,\partial_n^\C 
$$
established in Lemma~\ref{imageiso}.
Note that in the introduction we suppressed introducing 
$\tilde\DDD$, but by definition, we have
$\HH(\hat\DDD,\hat\pv,\hat \phc)=
\HH(\tilde\DDD,\tilde\pv,\tilde\phc)$.
\end{proof}

\begin{example}\label{refquest4}
If $A$ is an exterior algebra in generators $x,y$
of degree 1 
over a field $k$ of characteristic $ \neq 2$,
so that
$$
	x^2=xy+yx=y^2=0,
$$
then the noncommutative differential forms $\DDD$ over
$A$ are not a homological skyscraper. Indeed, the class
of 
$$
	1 \otimes _k xy =
	\frac{1}{2} \xi (x \otimes _k y - y \otimes _k x)  
$$
in
$\HH_1(\im\,\xi,\pv)$ is nontrivial - a straightforward
computation shows that 
$ \im\,\pv \xi $ has no component in $A_0 \otimes _k
A_2$, where $A_d$ is the degree $d$-component of $A$. 
As $\xi $ vanishes in degree 0, $1 \otimes _k xy$ also
defines a nontrivial calss in 
$\HH_1(\im\,\xi,\pv,\ph)=
\HH_1(\im\,\xi,\pv,\phc)$.
\end{example}

\begin{example}\label{example1}
For a nonstandard example of Theorem~\ref{spektral}, 
let $k$ be any commutative ring, $q \in k$, and $R$ be
the unital associative $k$-algebra generated by 
$x,y$ satisfying 
$$
	x^2=y^2=xy+qyx=0,
$$ 
so that $R$ is a free $k$-module with basis
$\{1,x,y,yx\}$.

We obtain a mixed complex of $R$-modules with
$$
	\DDD_n:=\left\{
	\begin{array}{ll}
	R/R yx \quad & n=0,\\
	R \quad	& n>0
	\end{array}
	\right.
$$
and $\pv_n$ given by right multiplication by $x$ 
and $\ph_n$ given by right multiplication by 
$y$. With $c_n=q^n$, that is, ${\phc}_n$ given by right
multiplication by $q^ny$, we obtain for $n>0$
and $r \in R=\DDD_n$
\begin{align*}
	& (\pv_{n-1} {\phc}_n +
	{\phc}_{n-1} \pv_n)r=
	r(q^nyx+q^{n-1}xy)=0,\\
	& (\pv_{n-1} \ph_n +
	\ph_{n-1} \pv_n)r=
	r(yx+xy)=(1-q)ryx,
\end{align*}
and for $n=0$
$$
	(\pv_1 {\phc}_0)(r+Ryx)=
	(\pv_1 \ph_0)(r+Ryx)=
	ryx+Ryx=
	0.
$$ 
In particular, 
$$
	\HH_1(\im\,\xi,\pv,\phc)=
	\im\,\xi_1 \cong k/I,
$$ 
where $I \lhd k$ is the
annihilator of $1-q$ in $k$. 

We furthermore see
$$
	\hat\DDD_n=\tilde\DDD_n=\left\{
	\begin{array}{ll}
	R/R yx \quad & n=0,\\
	R/R(1-q)yx \quad	& n>0,
	\end{array}
	\right.
$$
and direct computation yields 
$$
	\HH_2(\DDD,\pv,\phc) \cong k
$$
with basis given by the class of 
$(0,y + Ryx)$, while in 
$\HH_2(\hat\DDD,\hat\pv,\hat\phc)$ and 
$\HH_2(\DDD,\pv,\ph)$ 
there is an additional generator
represented by $((1-q)y,0)$,  
$$
	\HH_2(\hat\DDD,\hat\pv,\hat\phc) \cong 
	\HH_2(\DDD,\pv,\ph) \cong 
	k \oplus k/I
 \cong 
	\HH_2(\DDD,\pv,\phc) \oplus \HH_1(\im\,\xi,\pv,\phc).
$$   
Note also that $\ke\,\pi_2=0$ here, so the first
isomorphism above is canonical in this example. 
 
\end{example}

\begin{example}\label{example2}
Our final example demonstrates that 
$\ke\, \pi$ can be nontrivial. To see this, consider
the mixed complex
$$
	\DDD_n:=
	\left\{
	\begin{array}{ll}
	\mathbb{C} \quad
	& n = 0,1,2,\\
	0 \quad & n > 2,
	\end{array}\right.
$$
with (co)boundary maps
$$
	\pv_n:=
	\left\{
	\begin{array}{ll}
	\mathrm{id}  \quad
	& n = 1,\\
	0 \quad & n \neq 1,
	\end{array}\right.
	\quad
	\ph_n:=
	\left\{
	\begin{array}{ll}
	\mathrm{id}  \quad
	& n = 1,\\
	0 \quad & n \neq 1.
	\end{array}\right.
$$
Taking $c_n:=1$ for all $n$, we obtain 
$$
	\tot_n(\tilde\DDD) = 
	\left\{
	\begin{array}{ll}
	\mathbb{C} \quad & n = 0 \mbox { or } n=2k+1,\\
	\mathbb{C} \oplus \mathbb{C} \quad
	& n = 2k+2.
	\end{array}\right.
$$
Here 
$(0,1)=\tilde \pv(1) \in \ke\,\tilde \phc \cap
\im\,\tilde \pv \subset \tot_2(\tilde \DDD)$
generates $\ke\,\pi_2 \cong \mathbb{C} $, with 
$\im\,\tilde\pv \cap \im\,(\tilde \pv+\tilde \phc)=0$.
\end{example}


\begin{thebibliography}{99}

\bib{brylinski}{article}{
   author={Brylinski, Jean-Luc},
   title={A differential complex for Poisson
manifolds},
   journal={J. Differential Geom.},
   volume={28},
   date={1988},
   number={1},
   pages={93--114},
   issn={0022-040X},
   review={\MR{950556 (89m:58006)}},
}

\bib{connes}{article}{
   author={Connes, Alain},
   title={Cohomologie cyclique et foncteurs ${\rm Ext}^n$},
   language={French, with English summary},
   journal={C. R. Acad. Sci. Paris S\'er. I Math.},
   volume={296},
   date={1983},
   number={23},
   pages={953--958},
   issn={0249-6291},
   review={\MR{777584 (86d:18007)}},
}

\bib{connesihes}{article}{
   author={Connes, Alain},
   title={Noncommutative differential geometry},
   journal={Inst. Hautes \'Etudes Sci. Publ. Math.},
   number={62},
   date={1985},
   pages={257--360},
   issn={0073-8301},
   review={\MR{823176 (87i:58162)}},
}

\bib{cm}{article}{
   author={Connes, A.},
   author={Moscovici, H.},
   title={Hopf algebras, cyclic cohomology and the
transverse index theorem},
   journal={Comm. Math. Phys.},
   volume={198},
   date={1998},
   number={1},
   pages={199--246},
}

\bib{crainic}{article}{
   author={Crainic, Marius},
   title={Cyclic cohomology of Hopf algebras},
   journal={J. Pure Appl. Algebra},
   volume={166},
   date={2002},
   number={1-2},
   pages={29--66},
}

\bib{cq}{article}{
   author={Cuntz, Joachim},
   author={Quillen, Daniel},
   title={Cyclic homology and nonsingularity},
   journal={J. Amer. Math. Soc.},
   volume={8},
   date={1995},
   number={2},
   pages={373--442},
   issn={0894-0347},
   review={\MR{1303030 (96e:19004)}},
   doi={10.2307/2152822},
}

\bib{dk}{article}{
   author={Dwyer, W. G.},
   author={Kan, D. M.},
   title={Normalizing the cyclic modules of Connes},
   journal={Comment. Math. Helv.},
   volume={60},
   date={1985},
   number={4},
   pages={582--600},
   issn={0010-2571},
   review={\MR{826872 (88d:18009)}},
   doi={10.1007/BF02567433},
}

\bib{dk2}{article}{
   author={Dwyer, W. G.},
   author={Kan, D. M.},
   title={Three homotopy theories for cyclic modules},
   booktitle={Proceedings of the Northwestern
conference on cohomology of
   groups (Evanston, Ill., 1985)},
   journal={J. Pure Appl. Algebra},
   volume={44},
   date={1987},
   number={1-3},
   pages={165--175},
   issn={0022-4049},
   review={\MR{885102 (88f:18022)}},
   doi={10.1016/0022-4049(87)90022-3},
}



\bib{tomuli}{article}{
   author={Hadfield, Tom},
   author={Kr{\"a}hmer, Ulrich},
   title={Braided homology of quantum groups},
   journal={J. K-Theory},
   volume={4},
   date={2009},
   number={2},
   pages={299--332},
}

\bib{hkrs}{article}{
   author={Hajac, Piotr M.},
   author={Khalkhali, Masoud},
   author={Rangipour, Bahram},
   author={Sommerh\"auser, Yorck},
   title={Hopf-cyclic homology and cohomology with coefficients},
   language={English, with English and French summaries},
   journal={C. R. Math. Acad. Sci. Paris},
   volume={338},
   date={2004},
   number={9},
   pages={667--672},
   issn={1631-073X},
   review={\MR{2065371}},
   doi={10.1016/j.crma.2003.11.036},
}

\bib{huebschmann}{article}{
   author={Huebschmann, Johannes},
   title={Lie-Rinehart algebras, Gerstenhaber algebras
and
   Batalin-Vilkovisky algebras},
   journal={Ann. Inst. Fourier (Grenoble)},
   volume={48},
   date={1998},
   number={2},
   pages={425--440},
   issn={0373-0956},
   review={\MR{1625610 (99b:17021)}},
}
 

\bib{kassel}{article}{
   author={Kassel, Christian},
   title={Cyclic homology, comodules, and mixed
complexes},
   journal={J. Algebra},
   volume={107},
   date={1987},
   number={1},
   pages={195--216},
   issn={0021-8693},
   review={\MR{883882 (88k:18019)}},
   doi={10.1016/0021-8693(87)90086-X},
}


\bib{koszul}{article}{
   author={Koszul, Jean-Louis},
   title={Crochet de Schouten-Nijenhuis et
cohomologie},
   language={French},
   note={The mathematical heritage of \'Elie Cartan
(Lyon, 1984)},
   journal={Ast\'erisque},
   date={1985},
   number={Numero Hors Serie},
   pages={257--271},
   issn={0303-1179},
   review={\MR{837203 (88m:17013)}},
}

\bib{kmt}{article}{
   author={Kustermans, J.},
   author={Murphy, G. J.},
   author={Tuset, L.},
   title={Differential calculi over quantum groups and twisted cyclic
   cocycles},
   journal={J. Geom. Phys.},
   volume={44},
   date={2003},
   number={4},
   pages={570--594},
   issn={0393-0440},
   review={\MR{1943179}},
   doi={10.1016/S0393-0440(02)00115-8},
}



\bib{loday}{book}{
   author={Loday, Jean-Louis},
   title={Cyclic homology},
   series={Grundlehren der Mathematischen
Wissenschaften [Fundamental
   Principles of Mathematical Sciences]},
   volume={301},
   edition={2},
   note={Appendix E by Mar\'\i a O. Ronco;
   Chapter 13 by the author in collaboration with
Teimuraz Pirashvili},
   publisher={Springer-Verlag, Berlin},
   date={1998},
   pages={xx+513},
   isbn={3-540-63074-0},
   review={\MR{1600246 (98h:16014)}},
   doi={10.1007/978-3-662-11389-9},
}


\bib{shapiro}{article}{
   author={Shapiro, Jack M.},
   title={Relations between twisted derivations and
twisted cyclic homology},
   journal={Proc. Amer. Math. Soc.},
   volume={140},
   date={2012},
   number={8},
   pages={2647--2651},
   issn={0002-9939},
   review={\MR{2910752}},
   doi={10.1090/S0002-9939-2011-11285-1},
}
\end{thebibliography}
\end{document}